\newtheorem{corollary}{Corollary}[section]
\newtheorem{definition}[corollary]{Definition}
\newtheorem{lemma}[corollary]{Lemma}
\newtheorem{proposition}[corollary]{Proposition}
\newtheorem{remark}[corollary]{Remark}
\newtheorem{theorem}[corollary]{Theorem}
\newfont{\sBlackboard}{msbm10 scaled 900}
\newcommand{\mylabel}[1]{\label{#1}
            \ifx\undefined\stillediting
            \else \fbox{$#1$}\fi }
\newcommand{\BE}{\begin{equation}}
\newcommand{\EEQ}{\end{equation}}
\newcommand{\rfb}[1]{\mbox{\rm
   (\ref{#1})}\ifx\undefined\stillediting\else:\fbox{$#1$}\fi}
\newfont{\Blackboard}{msbm10 scaled 1200}
\newfont{\roma}{cmr10 scaled 1200}
\def\CC{\rm \hbox{C\kern-.56em\raise.4ex
         \hbox{$\scriptscriptstyle |$}\kern+0.5 em }}
\newcommand{\be}{\begin{equation}}
\newcommand{\ee}{\end{equation}}
\newcommand{\beq}{\begin{eqnarray}}
\newcommand{\eeq}{\end{eqnarray}}
\newcommand{\beqs}{\begin{eqnarray*}}
\newcommand{\eeqs}{\end{eqnarray*}}
\newcommand{\bt}{\begin{Theorem}}
\newcommand{\et}{\end{Theorem}}
\newcommand{\br}{\begin{remark}}
\newcommand{\er}{\end{remark}}
\newcommand{\bc}{\begin{Corollary}}
\newcommand{\ec}{\end{Corollary}}
\newcommand{\el}{\end{Lemma}}
\newcommand{\bd}{\begin{definition}}
\newcommand{\ed}{\end{definition}}
\newcommand{\mm}    {{\hbox{\hskip 0.5pt}}}
\newcommand{\bluff} {{\hbox{\raise 15pt \hbox{\mm}}}}
\def\section{\@startsection {section}{1}{\z@}{-3.5ex plus -1ex minus
    -.2ex}{2.3ex plus .2ex}{\large\bf}}
\def\be{\begin{equation}}
\def\ee{\end{equation}}
\begin{document}

\thispagestyle{empty}
\title[one dimensional model of  fluid-structure interaction]{Feedback stabilization of a simplified model of  fluid-structure interaction on a tree}
\author{Ka\"{\i}s AMMARI}
\address{UR Analysis and Control of Pde, UR 13ES64, Department of Mathematics, Faculty of Sciences of Monastir, University of Monastir, Tunisia}
\email{kais.ammari@fsm.rnu.tn}
\author{Farhat Shel}
\address{UR Analysis and Control of Pde, UR 13ES64, Department of Mathematics, Faculty of Sciences of Monastir, University of Monastir, Tunisia}
\email{farhat.shel@ipeit.rnu.tn}

\author{Muthusamy VANNINATHAN} 
\address{TIFR-CAM, Post Bag 6503, Bangalore 560065, India} \email{vanni@math.tifrbng.res.in}

\begin{abstract}
In this paper we study the dynamic feedback stability for a simplified model of fluid-structure interaction on a tree. 
We prove that, under some conditions, the energy of the solutions of the system decay exponentially to zero when the time tends to infinity. 
Our technique is based on a frequency domain method and a special 
analysis for the resolvent. 
\end{abstract}

\subjclass[2010]{35B40, 35L05, 93D15}
\keywords{Dynamic feedback, asymptotic stabilization, resolvent method}
\maketitle

\tableofcontents

\vfill\break
\section{Introduction}

\setcounter{equation}{0}

First of all, we introduce some notations needed to formulate the problem
under consideration, as introduced in \cite{Abd12} or in \cite{Mer08}. Let $%
\mathcal{T}$ be a tree (i.e. a planar connected graph without closed paths).
By the multiplicity of a vertex of $\mathcal{T}$ we mean the number of
edges that branch out from the vertex. If the multiplicity is equal to one,
the vertex is called exterior; otherwise, it is said to be interior. We
denote by $\{e_{1},...,e_{N}\}$ the set of edges of $\mathcal{T}$ and $%
\{a_{1},...,a_{N+1}\}$ its set of vertices and we assume that $a_{1}$ is the
root of $\mathcal{T}$ which will be denoted by $\mathcal{R}$, that $e_{1}$
is the edge containing $\mathcal{R}$ and $a_{2}$ is its vertex different
from $\mathcal{R}.$

We denote by $\mathcal{M}$ the set of the interior vertices of $\mathcal{T}$
and by $\mathcal{S}$ the set of the exterior vertices, except $\mathcal{R}$
and denote $I_{\mathcal{M}}$ and $I_{\mathcal{S}}$ the sets of indices of
interior and exterior vertices, except $\mathcal{R},$ respectively. Then $%
I=I_{\mathcal{M}}\cup I_{\mathcal{S}}$ is the set of indices of all
vertices, except the root $\mathcal{R}.$ We denote by $J$ the set $%
\{1,...,N\}$ and for $k\in I$ we will denote by $J_{k}$ the set of indices
of edges adjacent to $a_{k}$. If $k\in \mathcal{M},$ then the indice of the
unique element of $J_{k}$ will be denoted by $j_{k}.$ $.$

Furthermore, the length of the edge $e_{j}$ will be denoted by $\ell _{j}.$
Then $e_{j}$ may be parametrized by its arc length by means of the function $%
\pi _{j}:[0,\ell _{j}]\longrightarrow e_{j},\;x_{j}\mapsto \pi _{j}(x_{j}),$
and sometimes identified with the interval $(0,\ell _{j}).$

\noindent For a function $y:\mathcal{T}\longrightarrow \mathbb{C}$ we set $%
y^{j}=y\circ \pi _{j}$ its restriction to the edge $e_{j}$ and we will
denote $y^{j}(x)=y^{j}(\pi _{j}(x))$ for any $x$ in $(0,\ell _{j}).$

The incidence matrix $D=(d_{kj})_{(N+1)\times N}$ is defined by 
\begin{equation*}
d_{kj}=\left\{ 
\begin{tabular}{l}
$1$ if $\pi _{j}(\ell _{j})=a_{k},$ \\ 
$-1$ if $\pi _{j}(0)=a_{k},$ \\ 
$0$ otherwise.
\end{tabular}
\right.
\end{equation*}

We denote by $\left\langle .,.\right\rangle $ and $\left\| .\right\| $ the
inner product and norm in $L^{2}$-space, respectively.

\begin{figure}[th]
\centering
\includegraphics[width=12cm, keepaspectratio =true]{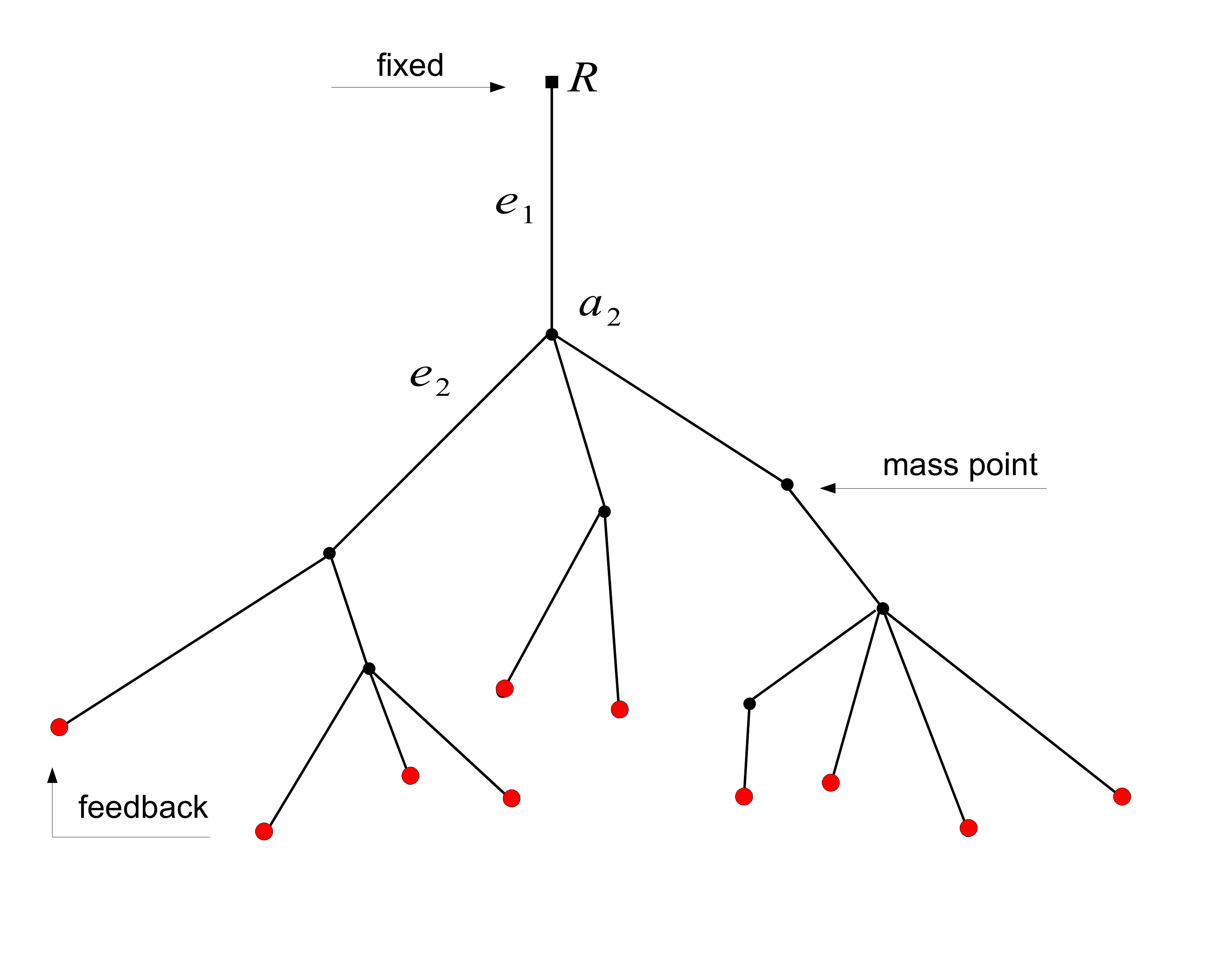}
\caption{A tree-shaped network }
\label{figure1}
\end{figure}

In this paper, we study the stability of a model of fluid propagating in a $%
1 $-$d$ network, under some feedback forces applied at exteriors nodes, with the
presence of point mass at inner nodes, see Figure \ref{figure1}. At rest,
the network coincides with the tree $\mathcal{T}$.

More precisely, we consider the following initial and boundary value
problem. 
\begin{equation}
\left\{ 
\begin{tabular}{l}
$y_{tt}^{j}-y_{xx}^{j}=0$ in $(0,\ell _{j})\times (0,\infty ),\;\;j\in J$,
\\ 
\\ 
$\sum\limits_{j\in J_{k}}d_{kj}y_{x}^{j}(a_{k},t)=s_{k}^{\prime
}(t),\;\;\;\;\;k\in I_{\mathcal{M}},$ \\ 
$s_{k}^{\prime \prime }(t)+s_{k}(t)=-y_{t}(a_{k},t),\;\;\;\;\;k\in I_{%
\mathcal{M}},$ \\ 
$y^{j}(a_{k},t)=y^{l}(a_{k},t),\;\;\;\;j,l\in J_{k},\;\;k\in I_{\mathcal{M}%
}, $ \\ 
$y^{1}(a_{1},t)=0,$ \\ 
$d_{kj_{k}}y_{x}^{j_{k}}(a_{k},t)=-y_{t}(a_{k},t),\;\;\;\;\;k\in I_{\mathcal{%
S}},$ \\ 
\\ 
$s_{k}(0)=s_{k,0},\;\;s_{k}^{\prime }(0)=s_{k,1},\;\;\;\;\;k\in I_{\mathcal{M%
}},$ \\ 
$y^{j}(x,0)=y_{0}^{j}(x),$ $\;y_{t}^{j}(x,0)=y_{k}^{1}(x),$ \ $x\in (0,\ell
_{j}),\;j\in J,$%
\end{tabular}
\right.  \label{1.1}
\end{equation}
where $y^{j}:[0,\ell _{j}]\times (0,\infty )\longrightarrow \mathbb{R},j\in
J $ represents the velocity potential of the fluid on the edge $e_{j}$ and$%
\;s_{k}:(0,\infty )\longrightarrow \mathbb{R},\;k\in I_{\mathcal{M}}$
denotes the displacement of the point mass occupying the node $a_{k}.$ These functions allow us to identify the network with its rest graph. See \cite
{Zua06} for more details.

This simplified model of fluid-structure interaction draws on the work of
Ervedoza and Vanninathan \cite{Erv14}; they consider a fluid occupying a
domain in two dimensions and a solid immersed in it and prove some results
of controllability of such system; see also \cite{Tuc09}. The problem of
fluid structure interaction in one dimension has been studied by several
authors. In \cite{Zua03} the authors study the asymptotic behavior of a one
dimensional model of mass point moving in a fluid. They consider the same
system in \cite{Zua06'} but with a finite number of mass points floating in
the fluid. Recently Tucsnak et al. \cite{Tuc13} studied the controllability
of a similar system.

Note that the point-wise (or boundary) stabilization on the wave equation
has been treated during the last few years, see for example \label{Amm01d}\cite
{Amm01d} for one string, and \cite{Amm04, Amm05, Amm15} for some networks of
strings.

The main result of this paper asserts that, under some conditions, the energy
of the solutions of the dissipative system decay exponentially to zero when
the time tends to infinity. The method is based on a frequency domain method
and a special analysis for the resolvent.

If $(y,s)=((y^{j})_{j\in J},(s_{k})_{k\in \mathcal{M}})$ is a solution of (%
\ref{1.1}) we define the energy of $(y,s)$ at instant $t$ by 
\begin{equation*}
E(t)=\frac{1}{2}\sum\limits_{j\in J}\int_{0}^{\ell _{j}}\left( \left|
y_{t}^{j}(x,t)\right| ^{2}+\left| y_{x}^{j}(x,t)\right| ^{2}\right) dx+\frac{%
1}{2}\sum\limits_{k\in I_{\mathcal{M}}}\left( \left| s_{k}^{\prime
}(t)\right| ^{2}+\left| s_{k}(t)\right| ^{2}\right) .
\end{equation*}
Simple formal calculations show that a sufficiently smooth solution of (\ref
{1.1}) satisfies the energy estimate 
\begin{equation}
E(0)-E(t)=\sum\limits_{k\in I_{\mathcal{S}}}\int_{0}^{t}\left|
y_{t}^{j_{k}}(a_{k},s)\right| ^{2}ds,\;\;\;\forall t\geq 0.  \label{1.3}
\end{equation}
\noindent In particular (\ref{1.3}) implies that 
\begin{equation*}
E(t)\leq E(0),\;\;\;\forall t\geq 0.
\end{equation*}
Estimate above suggests that the natural wellposedness space for (\ref{1.1})
is 
\begin{equation*}
H=V\times \prod_{j\in J}L^{2}(0,\ell _{j})\times \left( \prod_{k\in I_{%
\mathcal{M}}}\mathbb{C}\right)^{2}
\end{equation*}
where 
\begin{equation*}
V=\left\{ \Phi \in \prod_{j\in J}H^{1}(0,\ell _{j}),\;\Phi ^{1}(\mathcal{R}%
)=0,\;\Phi ^{j}(a_{k})=\Phi ^{l}(a_{k}),\;j,l\in J_{k},\;k\in I_{\mathcal{M}%
}\right\} .
\end{equation*}

We can rewrite the system (\ref{1.1}) as a first order differential
equation, by putting $z(t)=\left( 
\begin{array}{c}
y(t) \\ 
y^{\prime }(t) \\ 
s(t) \\ 
s^{\prime }(t)
\end{array}
\right) :$%
\begin{equation*}
z^{\prime }(t)=\mathcal{A}z(t),\;\;z(0)=z_{0}=\left( 
\begin{array}{c}
y_{0} \\ 
y_{1} \\ 
s^{0} \\ 
s^{1}
\end{array}
\right) ,
\end{equation*}
where 
\begin{equation*}
\mathcal{A}\left( 
\begin{array}{c}
y \\ 
v \\ 
p \\ 
q
\end{array}
\right) =\left( 
\begin{array}{c}
v \\ 
\dfrac{d^{2}y}{dx^{2}} \\ 
q \\ 
-p-v_{\mathcal{M}}
\end{array}
\right) ,\forall \left( 
\begin{array}{c}
y \\ 
v \\ 
p \\ 
q
\end{array}
\right) \in \mathcal{D}(\mathcal{A}),
\end{equation*}
with $v_{\mathcal{M}}=(v(a_{k}))_{k\in I_{\mathcal{M}}},$ and 
\begin{equation*}
\mathcal{D}(\mathcal{A})=\left\{ (y,v,p,q)\in \left[ \prod_{j\in
J}H^{2}(0,\ell _{j})\cap V\right] \times V\times \left( \prod_{k\in I_{%
\mathcal{M}}}\mathbb{C}\right)^{2} \text{ satisfying (\ref{1.33}) }\right\}
\end{equation*}
where 
\begin{equation}
\left\{ 
\begin{tabular}{l}
$\sum_{j\in J_{k}}d_{kj}\dfrac{dy^{j}}{dx}(a_{k})=q_{k},\;\forall k\in I_{%
\mathcal{M}},$ \\ 
$d_{kj}\dfrac{dy^{j_{k}}}{dx}(a_{k})=-v^{j_{k}}(a_{k}),\;\forall k\in I_{%
\mathcal{S}}.$%
\end{tabular}
\right.  \label{1.33}
\end{equation}
To simplify the notations, sometimes, we will write $y(a_{k})$ instead of $%
y^{j}(a_{k})$ for $y$ in $V$.

The outline of this work is the following. In Section \ref{sec2} we prove the
existence and uniqueness of solutions for system (\ref{1.1}). Section \ref{sec3} is devoted to prove the exponential stability of the associated semi group.
Finally, in Section \ref{sec4}, we prove the lack of exponential stability if the
graph contain a circuit or if there is at least one uncontrolled exterior
node (other than the root). The section \ref{sec5} is devoted to the study of the case of a chain with non equal mass points.

\section{Wellposedness} \label{sec2}

\begin{lemma}
The operator $\mathcal{A}$ generates a $\mathcal{C}_{0}$-semigroup of
contractions $(S(t))_{t\in \mathbb{R}^{+}}.$
\end{lemma}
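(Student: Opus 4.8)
The plan is to apply the Lumer--Phillips theorem: we must show that $\mathcal{A}$ is dissipative and that $\lambda I - \mathcal{A}$ is surjective for some (equivalently all) $\lambda > 0$, on the Hilbert space $H$ equipped with the inner product naturally associated to the energy $E$, namely
\begin{equation*}
\left\langle (y,v,p,q),(\tilde y,\tilde v,\tilde p,\tilde q)\right\rangle_H
= \sum_{j\in J}\int_0^{\ell_j}\!\left(y_x^j\overline{\tilde y_x^j}+v^j\overline{\tilde v^j}\right)dx
+ \sum_{k\in I_{\mathcal M}}\!\left(p_k\overline{\tilde p_k}+q_k\overline{\tilde q_k}\right).
\end{equation*}
One first checks that this is indeed an inner product on $H$: positivity of the $\int |y_x^j|^2$ part uses the Dirichlet condition $y^1(\mathcal R)=0$ together with connectedness of the tree, which forces $y\equiv 0$ on all edges when all $y_x^j$ vanish and all continuity conditions in $V$ hold. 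The graph being a tree (no circuits) is what makes this argument clean.

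First I would prove dissipativity. For $(y,v,p,q)\in\mathcal{D}(\mathcal{A})$ compute $\mathrm{Re}\,\langle \mathcal{A}(y,v,p,q),(y,v,p,q)\rangle_H$: integrating $\int_0^{\ell_j} v_x^j\overline{y_x^j}\,dx$ by parts and $\int_0^{\ell_j} y_{xx}^j\overline{v^j}\,dx$ by parts produces boundary terms at the vertices, and the remaining algebraic terms $q_k\overline{p_k}$, $(-p_k-v(a_k))\overline{q_k}$ combine with the $p$-$q$ part of the inner product so that the $p,q$-contributions cancel up to a term $-\mathrm{Re}\sum_{k\in I_{\mathcal M}} v(a_k)\overline{q_k}$. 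Collecting the boundary terms at interior vertices and using the transmission conditions defining $V$ together with $\sum_{j\in J_k}d_{kj}y_x^j(a_k)=q_k$ makes the interior-vertex contributions cancel exactly against that leftover term; at exterior vertices the condition $d_{kj_k}y_x^{j_k}(a_k)=-v^{j_k}(a_k)$ leaves $-\sum_{k\in I_{\mathcal S}}|v^{j_k}(a_k)|^2\le 0$, and at the root the Dirichlet condition kills the boundary term. This gives $\mathrm{Re}\,\langle\mathcal{A}z,z\rangle_H=-\sum_{k\in I_{\mathcal S}}|v^{j_k}(a_k)|^2\le 0$, consistent with the formal energy identity \eqref{1.3}.

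Next I would prove maximality, i.e.\ that for $\lambda>0$ and any $(f,g,\xi,\eta)\in H$ there exists $(y,v,p,q)\in\mathcal{D}(\mathcal{A})$ with $(\lambda I-\mathcal{A})(y,v,p,q)=(f,g,\xi,\eta)$. From the first and third components one solves $v=\lambda y-f$ and $q=\lambda p-\xi$; substituting into the remaining equations yields $\lambda p + (\lambda p-\xi) + \big(\lambda y(a_k)-f(a_k)\big) = $ (data), expressing $p_k$ linearly in terms of $y(a_k)$, and an elliptic problem on each edge $\lambda^2 y^j - y_{xx}^j = \lambda f^j + g^j$ coupled through the vertex conditions in \eqref{1.33} (now inhomogeneous in the $v$-terms). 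This is a coercive variational problem: test against $\bar\varphi$ with $\varphi\in V$, integrate by parts, and one obtains the bilinear form $a(y,\varphi)=\lambda^2\sum_j\int y^j\bar\varphi^j + \sum_j\int y_x^j\bar\varphi_x^j + \lambda\sum_{k\in I_{\mathcal S}}y(a_k)\overline{\varphi(a_k)} + (\text{terms in }y(a_k)\overline{\varphi(a_k)}\text{ from the }p_k,q_k\text{ elimination})$, which is continuous and coercive on $V$ (the extra vertex terms are nonnegative real for $\lambda>0$, so they only help), so Lax--Milgram gives a unique $y\in V$; elliptic regularity edge by edge gives $y^j\in H^2(0,\ell_j)$, and one reads off that all the boundary conditions of $\mathcal{D}(\mathcal{A})$ are recovered. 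Hence $0\in\rho(\mathcal{A})$ and in particular $\mathcal{A}$ is maximal dissipative, so by Lumer--Phillips it generates a $C_0$-semigroup of contractions.

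The main obstacle is the careful bookkeeping of the vertex boundary terms in both the dissipativity computation and the construction of the correct coercive bilinear form for surjectivity: one must track signs and the incidence coefficients $d_{kj}$ through the integrations by parts, correctly eliminate $p_k,q_k$ in terms of the vertex values $y(a_k)$, and verify that the resulting boundary terms have the right sign for coercivity and that the weak solution actually satisfies the transmission/feedback conditions in \eqref{1.33} rather than merely their natural-boundary-condition relaxations. None of this is deep, but it is where an error would most plausibly hide.
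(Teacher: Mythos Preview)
Your approach is essentially the paper's: dissipativity via the same integration-by-parts identity yielding $\mathrm{Re}\,\langle\mathcal{A}z,z\rangle_H=-\sum_{k\in I_{\mathcal S}}|v(a_k)|^2$, and the range condition for $\lambda>0$ via Lax--Milgram, the only cosmetic difference being that you eliminate $p_k$ in terms of $y(a_k)$ and pose the variational problem on $V$ alone, whereas the paper keeps $(y,p)$ as joint unknowns on $V\times\prod_{k\in I_{\mathcal M}}\mathbb{C}$. Two small slips to clean up: the substitution line should read $\lambda(\lambda p_k-\xi_k)+p_k+(\lambda y(a_k)-f(a_k))=\eta_k$, i.e.\ $(\lambda^2+1)p_k+\lambda y(a_k)=\text{data}$, and your closing sentence should say ``$\lambda\in\rho(\mathcal{A})$ for $\lambda>0$'' (which is what Lumer--Phillips requires) rather than ``$0\in\rho(\mathcal{A})$''.
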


\begin{proof}
It is clear that the operator $\mathcal{A}$ is dissipative, moreover, for
every $z=(y,v,p,q)\in \mathcal{D}(\mathcal{A}),$%
\begin{equation}
Re\left( \left\langle \mathcal{A}z,z\right\rangle _{H}\right) =-\sum_{k\in
I_{\mathcal{S}}}\left| v^{j_{k}}(a_{k})\right| ^{2}\leq 0.  \label{2.71}
\end{equation}

Now we prove that every positive real number $\lambda $ belongs to $\rho (%
\mathcal{A})$, the resolvent set of $\mathcal{A}.$ For this, let $%
Z=(f,g,c,d)\in H$ and we solve the equation 
\begin{equation}
(\lambda -\mathcal{A})z=Z  \label{1.2}
\end{equation}
with $z$ in $\mathcal{D}(\mathcal{A}).$

\noindent We rewrite (\ref{1.2}) explicitly 
\begin{equation}
\left\{ 
\begin{tabular}{l}
$\lambda y^{j}-v^{j}=f^{j},\;\;j\in J,$ \\ 
$\lambda v^{j}-\dfrac{d^{2}y^{j}}{dx^{2}}=g^{j},\;\;j\in J,$ \\ 
$\lambda p_{k}-q_{k}=c_{k},\;\;k\in I_{\mathcal{M}},$ \\ 
$\lambda q_{k}+p_{k}+v_{k}(a_{k})=d_{k},\;\;k\in I_{\mathcal{M}}.$%
\end{tabular}
\right.  \label{1.22}
\end{equation}
We eliminate $(v,q)$ in (\ref{1.22}) to get 
\begin{eqnarray}
\lambda ^{2}y^{j}-\dfrac{d^{2}y^{j}}{dx^{2}} &=&g^{j}+\lambda f^{j},\;\;j\in
J,  \label{1.31} \\
(\lambda ^{2}+1)p_{k}+v_{k}(a_{k}) &=&d_{k}+\lambda c_{k},\;\;k\in I_{%
\mathcal{M}}.  \label{1.4}
\end{eqnarray}
Let $w$ in $V.$ Multiplying (\ref{1.31}) by $w^{j}$ in $L^{2}(0,\ell _{j})$
and summing over $j\in J,$%
\begin{eqnarray}
&&\lambda ^{2}\sum_{j\in J}\left( \int_{0}^{\ell _{j}}y^{j}\overline{w^{j}}%
dx+\int_{0}^{\ell _{j}}\dfrac{dy^{j}}{dx}\dfrac{d\overline{w^{j}}}{dx}%
dx\right) -\sum_{k\in I_{\mathcal{M}}}\overline{w(a_{k})}q_{k}+\sum_{k\in I_{%
\mathcal{S}}}\overline{w(a_{k})}(a_{k})  \notag \\
&=&\sum_{j\in J}\int_{0}^{\ell _{j}}(g^{j}+\lambda f^{j})\overline{w^{j}}dx.
\label{1.222}
\end{eqnarray}
Multiplying (\ref{1.4}) by $r_{k}\in \mathbb{C}$ and summing over $k\in I_{%
\mathcal{M}},$ we get 
\begin{equation}
(\lambda ^{2}+1)\sum_{k\in I_{\mathcal{M}}}p_{k}\overline{r_{k}}+\sum_{k\in
I_{\mathcal{M}}}v(a_{k})\overline{r_{k}}=\sum_{k\in I_{\mathcal{M}%
}}(d_{k}+\lambda c_{k})\overline{r_{k}}.  \label{1.333}
\end{equation}
Summing (\ref{1.222}) and (\ref{1.333}) we get 
\begin{multline*}
\lambda ^{2}\sum_{j\in J}\left( \left\langle y^{j},w^{j}\right\rangle
+\left\langle \dfrac{dy^{j}}{dx},\dfrac{dw^{j}}{dx}\right\rangle \right)
+(\lambda ^{2}+1)\sum_{k\in I_{\mathcal{M}}}p_{k}\overline{r_{k}}+\sum_{k\in
I_{\mathcal{M}}}\left( v(a_{k})\overline{r_{k}}-\overline{w(a_{k})}%
q_{k}\right) \\
+\sum_{k\in I_{\mathcal{S}}}\overline{w(a_{k})}v(a_{k})=\sum_{j\in
J}\left\langle g^{j}+\lambda f^{j},w^{j}\right\rangle +\sum_{k\in I_{%
\mathcal{M}}}(d_{k}+\lambda c_{k})\overline{r_{k}}
\end{multline*}
to obtain 
\begin{equation*}
a((y,p),(w,r))=f(w,r)
\end{equation*}
where 
\begin{eqnarray*}
a((y,p),(w,r)) &=&\lambda ^{2}\sum_{j\in J}\left( \left\langle
y^{j},w^{j}\right\rangle +\left\langle \dfrac{dy^{j}}{dx},\dfrac{dw^{j}}{dx}%
\right\rangle \right) +(\lambda ^{2}+1)\sum_{k\in I_{\mathcal{M}}}p_{k}%
\overline{r_{k}} \\
&&+\lambda \sum_{k\in I_{\mathcal{M}}}\left( y(a_{k})\overline{r_{k}}-%
\overline{w(a_{k})}p_{k}\right) +\lambda \sum_{k\in I_{\mathcal{S}}}%
\overline{w(a_{k})}y(a_{k})
\end{eqnarray*}
and 
\begin{eqnarray*}
f(w,r) &=&\sum_{j\in J}\left\langle g^{j}+\lambda f^{j},w^{j}\right\rangle
+\sum_{k\in I_{\mathcal{M}}}(d_{k}+\lambda c_{k})\overline{r_{k}}-\sum_{k\in
I_{\mathcal{M}}}\overline{w(a_{k})}c_{k} \\
&&+\sum_{k\in I_{\mathcal{M}}}f(a_{k})\overline{r_{k}}+\sum_{k\in I_{%
\mathcal{S}}}\overline{w(a_{k})}f(a_{k}).
\end{eqnarray*}

$a$ is a continuous sesquilinear form on $V\times \prod\limits_{k\in I_{%
\mathcal{M}}}\mathbb{C}$ and $f$ is a continuous anti-linear form on $%
V\times \prod\limits_{k\in I_{\mathcal{M}}}\mathbb{C}.$ Moreover 
\begin{equation*}
a((w,r),(w,r))=\lambda ^{2}\sum_{j\in J}\left( \left\| w^{j}\right\|
^{2}+\left\| \dfrac{dw^{j}}{dx}\right\| ^{2}\right) +(\lambda
^{2}+1)\sum_{k\in I_{\mathcal{M}}}\left| r_{k}\right| ^{2}-2\mathbf{i}%
Im\left( \sum_{k\in I_{\mathcal{M}}}\overline{w(a_{k})}r_{k}\right)
\end{equation*}
and then 
\begin{equation*}
\left| a((w,r),(w,r))\right| \geq \lambda ^{2}\left[ \sum_{j\in J}\left(
\left\| w^{j}\right\| ^{2}+\left\| \dfrac{dw^{j}}{dx}\right\| ^{2}\right)
+\sum_{k\in I_{\mathcal{M}}}\left| r_{k}\right| ^{2}\right]
\end{equation*}
that is, $a$ is coercive. The conclusion result immediately from the Lax
Milgram lemma.
\end{proof}

\begin{proposition}
Suppose that $(y_{0},y_{1},s^{0},s^{1})\in H.$ Then the problem (\ref{1.1})
admits a unique solution 
\begin{equation*}
(y,y^{\prime },s,s^{\prime })\in \mathcal{C}([0,+\infty );H).
\end{equation*}
If $(y_{0},y_{1},s^{0},s^{1})\in \mathcal{D}(\mathcal{A})$ then 
\begin{equation*}
(y,y^{\prime },s,s^{\prime })\in \mathcal{C}([0,+\infty ),\mathcal{D}(%
\mathcal{A}))\cap \mathcal{C}^{1}([0,+\infty );\mathcal{H}).
\end{equation*}
Moreover $(y,s)$ satisfies the energy estimate (\ref{1.3}).
\end{proposition}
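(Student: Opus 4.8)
The plan is to deduce the proposition directly from the preceding lemma together with the classical generation theory of linear semigroups. Since $\mathcal{A}$ generates a $C_0$-semigroup of contractions $(S(t))_{t\geq 0}$ on $H$, for every $z_0\in H$ the function $z(t)=S(t)z_0$ is the unique mild solution of the abstract Cauchy problem $z'(t)=\mathcal{A}z(t)$, $z(0)=z_0$, and it belongs to $\mathcal{C}([0,+\infty);H)$; when $z_0\in\mathcal{D}(\mathcal{A})$ one has in addition $z(t)\in\mathcal{D}(\mathcal{A})$ for all $t\geq 0$, $z\in\mathcal{C}^1([0,+\infty);H)$, $z(\cdot)\in\mathcal{C}([0,+\infty);\mathcal{D}(\mathcal{A}))$ for the graph norm, and $z'(t)=\mathcal{A}z(t)$ pointwise. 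These are standard facts (e.g.\ Pazy's book).

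Next I would verify that the abstract problem is a faithful reformulation of (\ref{1.1}): writing $z=(y,v,p,q)$ and unwinding the definition of $\mathcal{A}$ gives $v=y^{\prime}$, $q=s^{\prime}$, $y^{\prime\prime}=v^{\prime}=y_{xx}$ (so $y_{tt}^{j}-y_{xx}^{j}=0$), and $q^{\prime}=-p-v_{\mathcal{M}}$, i.e.\ $s_{k}^{\prime\prime}+s_{k}=-y_{t}(a_{k})$; the membership $z(t)\in\mathcal{D}(\mathcal{A})$ encodes the continuity conditions at the interior vertices, the Dirichlet condition $y^{1}(\mathcal{R})=0$, and the Kirchhoff/feedback conditions (\ref{1.33}), which are precisely the transmission and boundary conditions in (\ref{1.1}). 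Hence a classical solution of the abstract problem is a strong solution of (\ref{1.1}) and conversely, and uniqueness in either formulation follows from uniqueness for the semigroup.

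For the energy identity, if $z_0\in\mathcal{D}(\mathcal{A})$ then $t\mapsto E(t)=\tfrac12\|z(t)\|_{H}^{2}$ is $\mathcal{C}^1$ and, by (\ref{2.71}),
\begin{equation*}
\frac{d}{dt}E(t)=\operatorname{Re}\left\langle \mathcal{A}z(t),z(t)\right\rangle_{H}=-\sum_{k\in I_{\mathcal{S}}}\left|v^{j_{k}}(a_{k},t)\right|^{2};
\end{equation*}
integrating on $[0,t]$ yields (\ref{1.3}). For general data $z_0\in H$ one approximates by $z_{0,n}\in\mathcal{D}(\mathcal{A})$ with $z_{0,n}\to z_0$ in $H$, uses the contraction property to get $S(t)z_{0,n}\to S(t)z_0$ uniformly on compact time intervals, and passes to the limit in the identity $\int_{0}^{t}|v^{j_{k}}_{n}(a_{k},s)|^{2}\,ds=E_{n}(0)-E_{n}(t)$; this also defines the boundary trace term in (\ref{1.3}) for finite-energy solutions as the corresponding $L^{2}_{\mathrm{loc}}$ limit.

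The only mildly delicate point is this last step: for $z_0\in H$ the solution need not lie in $\mathcal{D}(\mathcal{A})$ at any time, so the pointwise trace $y_{t}^{j_{k}}(a_{k},t)$ in (\ref{1.3}) must be understood through the density argument just described rather than via a direct trace estimate. Everything else is routine bookkeeping once the lemma is granted.
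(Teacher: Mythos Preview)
Your proposal is correct and follows exactly the standard route one expects; in fact the paper gives no proof at all for this proposition, stating it immediately after the generation lemma as an implicit consequence of classical semigroup theory together with \rfb{2.71}. Your write-up simply makes explicit the routine details (mild/classical solutions, the equivalence with \rfb{1.1}, and the energy identity by differentiation and density) that the authors left to the reader.
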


\section{Exponential stability} \label{sec3}

It is clear that if $\mathcal{T}$ contains an edge $e_{j}$, not attached to a leaf, with length $\ell _{j}\in \pi \mathbb{N}$ then $\mathbf{i}$ is
eigenvalue of $\mathcal{A}$ with eigenvector $z=(y,v,p,q)$ such that $y^{j}= \mathbf{i}\sin x,$ $v^{j}=-\sin x,$ $p_{k}=1$ and $q_{k}=\mathbf{i}$ when $
a_{k}$ is the nearest end of $e_{j}$ to the root $\mathcal{R},$ and all the
other compoents of $z$ are null.

In the following, the tree $\mathcal{T}$ is said to be a \textbf{Pi-tree} if
it has has no edges of length $m\pi $ with $m\in \mathbb{N}^{\ast },$ except
maybe those attached to leaves. Then we have the following result:

\begin{lemma}
\label{lem2} The spectrum of $\mathcal{A}$ contains no point on the
imaginary axis if and only if $\mathcal{T}$ is a Pi-tree.
\end{lemma}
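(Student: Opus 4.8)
The plan is to prove the contrapositive in one direction and a direct computation in the other. The easy direction is already essentially done in the paragraph preceding the lemma: if $\mathcal{T}$ is \emph{not} a Pi-tree, then it has an edge $e_j$ of length $m\pi$, $m\in\mathbb{N}^*$, which is not attached to a leaf, and the explicit vector $z=(y,v,p,q)$ with $y^j=\mathbf{i}\sin x$, $v^j=-\sin x$ on that edge, $p_k=1$, $q_k=\mathbf{i}$ at the endpoint $a_k$ of $e_j$ nearest $\mathcal{R}$, and all other components zero, lies in $\mathcal{D}(\mathcal{A})$ and satisfies $\mathcal{A}z=\mathbf{i}z$; hence $\mathbf{i}\in\sigma(\mathcal{A})$. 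So it remains to show: if $\mathcal{T}$ is a Pi-tree, then $\mathbf{i}\omega\notin\sigma(\mathcal{A})$ for every $\omega\in\mathbb{R}$.

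For the main direction I would argue in two stages. First, $\mathbf{i}\omega$ is not an eigenvalue: suppose $\mathcal{A}z=\mathbf{i}\omega z$ with $z=(y,v,p,q)\in\mathcal{D}(\mathcal{A})$. Taking real parts in $\langle\mathcal{A}z,z\rangle_H=\mathbf{i}\omega\|z\|_H^2$ and using the dissipation identity \eqref{2.71} forces $v^{j_k}(a_k)=0$ for all $k\in I_{\mathcal{S}}$. From the structure of $\mathcal{A}$ one has $v=\mathbf{i}\omega y$, $\frac{d^2y^j}{dx^2}=-\omega^2 y^j$ on each edge, $q=\mathbf{i}\omega p$, and $-p-v_{\mathcal{M}}=\mathbf{i}\omega q=-\omega^2 p$, so $(\omega^2-1)p_k=v(a_k)=\mathbf{i}\omega y(a_k)$. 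The vanishing of $v^{j_k}(a_k)$ at leaves together with the boundary condition $d_{kj_k}\frac{dy^{j_k}}{dx}(a_k)=-v^{j_k}(a_k)=0$ gives both a Dirichlet and a Neumann condition at each leaf; I then propagate this through the tree, edge by edge from the leaves toward the root, using the continuity condition at interior vertices, the Kirchhoff-type condition $\sum_{j\in J_k}d_{kj}\frac{dy^j}{dx}(a_k)=q_k=\mathbf{i}\omega p_k$, and the relation between $p_k$ and $y(a_k)$. On each edge $y^j$ is a combination of $\sin\omega x$ and $\cos\omega x$; the Pi-tree hypothesis (no edge of length $m\pi$ except at leaves) is exactly what is needed to conclude $y^j\equiv 0$ at each propagation step. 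The case $\omega=0$ must be checked separately but is immediate since then $y^j$ is affine and the boundary data kill it. This yields $y\equiv 0$, then $v\equiv 0$, then $p=q=0$, so $z=0$.

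Second, I would upgrade ``not an eigenvalue'' to ``not in the spectrum.'' Since $\mathcal{A}^{-1}$ exists (shown in the wellposedness lemma) and $\mathcal{A}$ has compact resolvent — because $\mathcal{D}(\mathcal{A})$ embeds compactly into $H$, the $y$-component lying in $\prod_j H^2(0,\ell_j)$ and the $s$-components being finite-dimensional — the spectrum of $\mathcal{A}$ is discrete and consists only of eigenvalues. Hence $\mathbf{i}\omega\notin\sigma(\mathcal{A})$ for all real $\omega$, which completes the proof.

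The main obstacle is the propagation argument in the first stage: one must carefully organize the induction over the tree so that at every vertex the Dirichlet datum $y(a_k)$ and the Neumann data $\frac{dy^j}{dx}(a_k)$ inherited from the already-treated subtrees, combined with the point-mass relation $(\omega^2-1)p_k=\mathbf{i}\omega y(a_k)$ and the Kirchhoff condition, force all the coefficients of the remaining edge to vanish — and this is precisely where the excluded lengths $m\pi$ would otherwise obstruct the conclusion. A minor subtlety is the exceptional value $\omega^2=1$ in the relation $(\omega^2-1)p_k=\mathbf{i}\omega y(a_k)$, which forces $y(a_k)=0$ directly and must be folded into the bookkeeping.
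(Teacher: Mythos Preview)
Your plan is correct and follows the paper's proof essentially line for line: compact resolvent reduces the question to eigenvalues, the dissipation identity \eqref{2.71} forces $v(a_k)=\frac{dy^{j_k}}{dx}(a_k)=0$ at every leaf, and then a case split $\omega=0$ / $\omega^2=1$ / $\omega^2\neq 1$ together with propagation from the leaves toward the root yields $z=0$. One sharpening worth making explicit: the Pi-tree hypothesis is used \emph{only} in the case $\omega^2=1$ (where the relation $(\omega^2-1)p_k=\mathbf{i}\omega y(a_k)$ gives $y(a_k)=0$ at every interior node, hence Dirichlet data at both ends of each non-leaf edge, and $\ell_j\notin\pi\mathbb{N}$ is needed to force $y^j\equiv 0$); for $\omega^2\neq 1$ the propagation produces full Cauchy data $y^l(a_k)=\frac{dy^l}{dx}(a_k)=0$ at one end of the next edge, so $y^l\equiv 0$ with no length restriction, which is why your phrase ``exactly what is needed at each propagation step'' slightly overstates where the hypothesis enters.
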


\begin{proof}
Since $\mathcal{A}$ has compact resolvent, its spectrum $\sigma (\mathcal{A}%
) $ only consists of eigenvalues of $\mathcal{A}.$ We will show that the
equation 
\begin{equation}
\mathcal{A}z=\mathbf{i}\beta z  \label{2.6}
\end{equation}
with $z=\left( 
\begin{array}{c}
y \\ 
v \\ 
p \\ 
q
\end{array}
\right) \in \mathcal{D}(\mathcal{A})$ and $\beta \in \mathbb{R}$ has only
trivial solution.

By taking the inner product of (\ref{2.6}) with $z\in H$ and using that 
\begin{equation*}
Re\left( \left\langle \mathcal{A}z,z\right\rangle _{H}\right) =-\sum_{k\in
I_{\mathcal{S}}}\left| v(a_{k})\right| ^{2}
\end{equation*}
we obtain that $v(a_{k})=0$ for $k\in I_{\mathcal{S}}$ and then, 
\begin{equation}
\dfrac{dy}{dx}(a_{k})=0\text{ for }k\in I_{\mathcal{S}}.  \label{cc}
\end{equation}
Now the equation (\ref{2.6}) can be rewritten explicitly as 
\begin{eqnarray}
v^{j} &=&\mathbf{i}\beta y^{j},\;\;j\in J,  \label{s1} \\
\dfrac{d^{2}y^{j}}{dx^{2}} &=&\mathbf{i}\beta v^{j},\;\;j\in J,  \label{s2}
\\
q_{k} &=&\mathbf{i}\beta p_{k},\;\;k\in I_{\mathcal{M}},  \label{s3} \\
-p_{k}-v(a_{k}) &=&\mathbf{i}\beta q_{\kappa },\;\;k\in I_{\mathcal{M}}.
\label{s4}
\end{eqnarray}

\noindent If $\beta =0$ then $v=0,$ $q=0$ and $p=0.$

\noindent Multiplying the second equation in the above system by $y^{j}$ and
then summing over $j,$ we obtain 
\begin{equation*}
\sum_{j\in J}\left\| \dfrac{dy^{j}}{dx}\right\| ^{2}=0,
\end{equation*}
which implies, using continuity condition of $y$ at inner nodes and its
Dirichlet condition at $\mathcal{R},$ that $y=0.$

Next, we suppose that $\beta \neq 0.$ We have, using (\ref{s1}-\ref{s4}), 
\begin{eqnarray}
\beta ^{2}y^{j}+\dfrac{d^{2}y^{j}}{dx^{2}} &=&0,\;\;j\in J,  \label{eq} \\
(\beta ^{2}-1)p_{k} &=&v(a_{k}),\;\;k\in I_{\mathcal{M}}.  \label{eq2}
\end{eqnarray}
The function $y^{j},$ $j\in J$ is then of the form 
\begin{equation*}
y^{j}=\alpha _{1}\sin (\beta x)+\alpha _{2}\cos (\beta x).
\end{equation*}
Using (\ref{cc}) and (\ref{s1}) we deduce that, for $k\in I_{\mathcal{S}},$ $%
y^{j_{k}}=0$ and then $v^{j_{k}}=0.$

For the sequel of the proof we consider two cases:

\textit{First case:} $\beta ^{2}=1$. From (\ref{eq2}), (\ref{s1}) and the
Dirichlet condition at $\mathcal{R},$ we deduce that $y^{j}(0)=y^{j}(\ell
_{j})=0$ for $j\in J.$ Since $\mathcal{T}$ is a Pi-tree we conclude that $%
y^{j}=0$ for $j\in J.$ Return back to the balance conditions and (\ref{s3}),
one can deduce that $q=p=0$ and hence $y=0.$

\textit{Second case: }$\beta ^{2}\neq 1.$ Let $a_{k}$ the second end of an
edge $e_{j}$ attached to a leaf. Then $p_{k}=0$ and $q_{k}=0.$ Let $e_{l}$
the edge ended by $a_{k}$ and not attached to a leaf. We have $%
y^{l}(a_{k})=0 $ and $\dfrac{dy^{l}}{dx}(a_{k})=0.$ Then by (\ref{eq}) $%
y^{l}=0.$ We iterate such procedure from root to leaves to conclude that $%
y=0.$
\end{proof}

The main result of this paper concerns the precise asymptotic behavior of
the solution of (\ref{1.1}). Our technique is based on a frequency domain
method and a special analysis for the resolvent.

Recall that the system (\ref{1.1}) is said to be exponentially stable if
there exist two constants $M,\omega >0,$ such that for all $%
(y_{0},y_{1},s^{0},s^{1})\in H,$ 
\begin{equation*}
E(t)\leq Me^{-\omega t}\left\| (y_{0},y_{1},s^{0},s^{1})\right\|
_{H}^{2},\;\;\forall t\geq 0.
\end{equation*}
Then, our main result is the following:

\begin{theorem}
\label{th32} The system defined by equations (\ref{1.1}) is exponentially
stable if and only if $\mathcal{T}$ is a Pi-tree.
\end{theorem}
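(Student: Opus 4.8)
The plan is to use the classical frequency-domain criterion of Huang and Pr\"uss. Since $\mathcal{A}$ generates a $C_0$-semigroup of contractions on the Hilbert space $H$ (Section~\ref{sec2}), that semigroup is exponentially stable if and only if
\[
\text{(i)}\quad \mathbf{i}\,\mathbb{R}\subset\rho(\mathcal{A}),
\qquad\text{and}\qquad
\text{(ii)}\quad \sup_{\beta\in\mathbb{R}}\big\|(\mathbf{i}\beta-\mathcal{A})^{-1}\big\|_{\mathcal{L}(H)}<\infty .
\]
For the ``only if'' part: if $\mathcal{T}$ is not a Pi-tree, then by Lemma~\ref{lem2} (concretely, via the eigenvector exhibited just before it) $\mathcal{A}$ has an eigenvalue on the imaginary axis, so (i) fails and the system is not exponentially stable. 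For the ``if'' part, assume $\mathcal{T}$ is a Pi-tree; then (i) holds by Lemma~\ref{lem2}, and the whole point is to establish the uniform resolvent bound (ii).

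I would prove (ii) by contradiction: suppose there are $\beta_n\in\mathbb{R}$ and $z_n=(y_n,v_n,p_n,q_n)\in\mathcal{D}(\mathcal{A})$ with $\|z_n\|_H=1$ and $(\mathbf{i}\beta_n-\mathcal{A})z_n=Z_n:=(f_n,g_n,c_n,d_n)\to 0$ in $H$. If $(\beta_n)$ had a bounded subsequence, then along it $\beta_n\to\beta_0$ with $\mathbf{i}\beta_0\in\rho(\mathcal{A})$, and norm-continuity of the resolvent on $\rho(\mathcal{A})$ would give $z_n=(\mathbf{i}\beta_n-\mathcal{A})^{-1}Z_n\to 0$, contradicting $\|z_n\|_H=1$; hence $|\beta_n|\to\infty$. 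Taking real parts in $\langle Z_n,z_n\rangle_H$ and using the dissipation identity \eqref{2.71} gives $\sum_{k\in I_{\mathcal{S}}}|v_n^{j_k}(a_k)|^2\to 0$, so by \eqref{1.33} both $v_n^{j_k}(a_k)\to 0$ and $\frac{dy_n^{j_k}}{dx}(a_k)\to 0$ for every $k\in I_{\mathcal{S}}$. Written out, \eqref{1.22} gives $v_n^j=\mathbf{i}\beta_n y_n^j-f_n^j$ and $\frac{d^2y_n^j}{dx^2}+\beta_n^2 y_n^j=-\mathbf{i}\beta_n f_n^j-g_n^j=:h_n^j$ on each $e_j$; moreover at any vertex $a$ one has $\beta_n y_n^j(a)=-\mathbf{i}\big(v_n^j(a)+f_n^j(a)\big)$, so — using $f_n^j(a)\to 0$ via the trace $H^1(0,\ell_j)\hookrightarrow C[0,\ell_j]$ and $|\beta_n|\to\infty$ — the condition $v_n^j(a)\to 0$ forces both $y_n^j(a)\to 0$ and $\beta_n y_n^j(a)\to 0$.

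The heart of the proof is an \emph{edge estimate}. Fix $j\in J$ and parametrize $e_j$ with $x=0$ at a chosen endpoint. Variation of parameters for $\frac{d^2y_n^j}{dx^2}+\beta_n^2 y_n^j=h_n^j$ gives
\[
y_n^j(x)=y_n^j(0)\cos(\beta_n x)+\frac{1}{\beta_n}\,\frac{dy_n^j}{dx}(0)\,\sin(\beta_n x)+\frac{1}{\beta_n}\int_0^x\sin\!\big(\beta_n(x-t)\big)\,h_n^j(t)\,dt .
\]
A short computation — using $g_n^j\to 0$ in $L^2$, $f_n^j\to 0$ in $H^1$, and, for the derivative quantities, one integration by parts trading the factor $\beta_n$ in front of $f_n^j$ for $(f_n^j)'$ — yields, uniformly on $[0,\ell_j]$,
\[
|y_n^j|\le|y_n^j(0)|+\tfrac{1}{|\beta_n|}\big|\tfrac{dy_n^j}{dx}(0)\big|+o(1),
\qquad
\big|\tfrac{dy_n^j}{dx}\big|+|\beta_n y_n^j|\le\big|\beta_n y_n^j(0)\big|+\big|\tfrac{dy_n^j}{dx}(0)\big|+o(1).
\]
Hence whenever $y_n^j(0)$, $\frac{dy_n^j}{dx}(0)$ and $\beta_n y_n^j(0)$ all tend to $0$, the quantities $y_n^j$, $\frac{dy_n^j}{dx}$, $\beta_n y_n^j$ (and so $v_n^j=\mathbf{i}\beta_n y_n^j-f_n^j$) tend to $0$ in $L^\infty(0,\ell_j)$, and the same three quantities vanish in the limit at the other endpoint $x=\ell_j$. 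This estimate, and in particular the integration by parts needed to control the forcing $-\mathbf{i}\beta_n f_n^j$ (note that $\beta_n f_n^j$ is \emph{not} small in $L^2$), is the technically delicate point of the whole argument.

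It remains to run the edge estimate from the leaves toward the root. For a leaf edge $e_{j_k}$, $k\in I_{\mathcal{S}}$, take the distinguished endpoint to be $a_k$, where $\frac{dy_n^{j_k}}{dx}(a_k)\to 0$ and $v_n^{j_k}(a_k)\to 0$ (hence also $y_n^{j_k}(a_k),\beta_n y_n^{j_k}(a_k)\to 0$); the edge estimate then gives decay of $z_n$ on $e_{j_k}$ and vanishing limiting data at its other end. For an interior vertex $a_m$, $m\in I_{\mathcal{M}}$, all of whose incident edges except the edge $e_{\ell}$ leading toward $\mathcal{R}$ have already been treated: continuity of $y_n,v_n\in V$ at $a_m$ gives $y_n^{\ell}(a_m)\to 0$ and $v_n(a_m)\to 0$; inserting $v_n(a_m)\to 0$ in the last two scalar equations of \eqref{1.22} and using $|\beta_n|\to\infty$ to handle the $c_{n,m}$-terms gives $p_{n,m}\to 0$ and $q_{n,m}\to 0$; and the Kirchhoff condition $\sum_{j\in J_m}d_{mj}\frac{dy_n^j}{dx}(a_m)=q_{n,m}$, together with $\frac{dy_n^j}{dx}(a_m)\to 0$ on the treated edges, gives $\frac{dy_n^{\ell}}{dx}(a_m)\to 0$ and, by the vertex identity above, $\beta_n y_n^{\ell}(a_m)\to 0$. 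Thus the edge estimate applies to $e_{\ell}$ and propagates the decay one step toward $\mathcal{R}$. Since $\mathcal{T}$ is finite, after finitely many steps $\sum_{j\in J}\|\frac{dy_n^j}{dx}\|^2+\sum_{j\in J}\|v_n^j\|^2+\sum_{m\in I_{\mathcal{M}}}(|p_{n,m}|^2+|q_{n,m}|^2)\to 0$, i.e. $\|z_n\|_H\to 0$, contradicting $\|z_n\|_H=1$; this establishes (ii) and hence the theorem. Besides the edge estimate, the remaining care point is precisely this leaves-to-root bookkeeping: one must check that at each interior vertex the point-mass and Kirchhoff relations supply exactly the endpoint data that the edge estimate requires.
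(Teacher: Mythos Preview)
Your proof is correct and follows the same global strategy as the paper: Huang--Pr\"uss, contradiction with a normalized sequence, dissipation at the leaves, an edge estimate, and a leaves-to-root iteration. The genuine difference is in the \emph{edge estimate}. The paper uses the classical multiplier method: it tests the equation $-\beta_n^2 y_n^j-\dfrac{d^2y_n^j}{dx^2}=g_n^j+\mathbf{i}\beta_n f_n^j$ against $b\,\dfrac{dy_n^j}{dx}$ with $b(x)=x$ or $b(x)=\ell_j-x$, obtaining directly that $\int_0^{\ell_j}\big(|\beta_n y_n^j|^2+|\tfrac{dy_n^j}{dx}|^2\big)\,dx\to 0$ once the boundary traces vanish. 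You instead write the explicit Duhamel representation and, via one integration by parts on the forcing term $-\mathbf{i}\beta_n f_n^j$, obtain pointwise $L^\infty$ control of $\beta_n y_n^j$ and $\tfrac{dy_n^j}{dx}$; this is more elementary and self-contained, and has the side benefit that the vertex traces at the far end come for free. A second, minor difference is the handling of the mass variables: the paper disposes of $p_{k,n},q_{k,n}$ up front (its ``first step'') using the interpolation bound $\|v_n^j\|_\infty/\beta_n^{1/2}=O(1)$, whereas you recover $p_{n,m},q_{n,m}\to 0$ during the propagation from $v_n(a_m)\to 0$; both orderings work. In short, your argument is a valid alternative whose Duhamel edge estimate replaces the paper's multiplier identity, trading a slightly more explicit computation for a somewhat more transparent passage of vertex data along the tree.
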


\begin{proof}
By classical result (see Huang \cite{Hua85} and Pr\"{u}ss \cite{Pru84}) it
suffices to show that the corresponding operator $\mathcal{A}$ satisfies the
following two conditions: 
\begin{equation}
\rho (\mathcal{A})\supset \{\mathbf{i}\beta \mid \beta \in \mathbb{R}%
\}\equiv \mathbf{i}\mathbb{R},  \label{2.9}
\end{equation}
and 
\begin{equation}
\lim \sup\limits_{\left| \beta \right| \rightarrow \infty }\left\| (\mathbf{i%
}\beta -\mathcal{A})^{-1}\right\| <\infty ,  \label{2.10}
\end{equation}
where $\rho (\mathcal{A})$ denotes the resolvent set of the operator $%
\mathcal{A}.$

By lemma \ref{lem2} the condition (\ref{2.9}) is satisfied. Suppose that the
condition (\ref{2.10}) is false. By the Banach-Steinhaus Theorem (see \cite
{Bre83}), there exist a sequence of real numbers $\beta _{n}\rightarrow
\infty $ ($\beta _{n}>0$ without loss of generality) and a sequence of
vector $z_{n}=\left( 
\begin{array}{c}
y_{n} \\ 
v_{n} \\ 
p_{n} \\ 
q_{n}
\end{array}
\right) \in \mathcal{D}(\mathcal{A})$ with $\left\| z_{n}\right\| _{H}=1$
such that 
\begin{equation}
\left\| (\mathbf{i}\beta _{n}I-\mathcal{A})z_{n}\right\| _{H}\longrightarrow
0\text{\ \ \ as\ \ \ }n\longrightarrow \infty ,  \label{2.11}
\end{equation}
i.e. 
\begin{eqnarray}
\mathbf{i}\beta _{n}y_{n}^{j}-v_{n}^{j} &\equiv &f_{n}^{j}\longrightarrow 0%
\text{\ \ \ \ \ in \ \ }H^{2}(0,\ell _{j}),  \label{2.12} \\
\mathbf{i}\beta _{n}v_{n}^{j}-\dfrac{d^{2}y_{n}^{j}}{dx^{2}} &\equiv
&g_{n}^{j}\longrightarrow 0\text{\ \ \ \ in \ \ }L^{2}(0,\ell _{j}),
\label{2.13} \\
\mathbf{i}\beta _{n}p_{k,n}-q_{k,n} &\equiv &h_{k,n}\longrightarrow 0\text{\
\ \ in\ \ }\mathbb{C},  \label{2.14} \\
\mathbf{i}\beta _{n}q_{k,n}+p_{k,n}+v_{n}(a_{k}) &\equiv
&r_{k,n}\longrightarrow 0\text{\ \ \ in\ \ }\mathbb{C}.  \label{2.15}
\end{eqnarray}
Our goal is to derive from (\ref{2.11}) that $\left\| z_{n}\right\| _{H}$
converge to zero, thus, a contradiction.

The proof is divided in three steps:

\textit{First step.} Recall that for every $j$ in $J,$%
\begin{equation*}
\frac{\left\| v^{j}\right\| _{\infty }}{\beta _{n}^{1/2}}\leq \frac{C_{1}}{%
\beta _{n}^{1/2}}\left\| \dfrac{dv^{j}}{dx}\right\| ^{1/2}\left\|
v^{j}\right\| ^{1/2}+C_{2}\frac{\left\| v^{j}\right\| }{\beta _{n}^{1/2}},
\end{equation*}
for some positives constants $C_{1}$ and $C_{2}.$ This implies, using (\ref
{2.12}), that $\frac{\left\| v^{j}\right\| _{\infty }}{\beta _{n}^{1/2}}$ is
bounded.\newline
Then, for every $k$ in $I_{\mathcal{M}},$ by (\ref{2.15}), $q_{k,n}$
converge to zero and then $\beta _{n}p_{k,n}$ converge to zero in view of (%
\ref{2.14}). In particular $p_{k,n}$ converge to zero.

\textit{Second step.} We notice that from (\ref{2.71}) we have 
\begin{equation}
\left\| (\mathbf{i}\beta _{n}I-\mathcal{A})z_{n}\right\| _{H}\geq \left|
Re\left\langle \mathbf{i}\beta _{n}I-\mathcal{A})z_{n},z_{n}\right\rangle
_{H}\right| =\sum_{k\in I_{\mathcal{S}}}\left| v_{n}(a_{k})\right| ^{2}.
\label{2.16}
\end{equation}
Then, by (\ref{2.11}) 
\begin{equation*}
\left| v_{n}(a_{k})\right| \longrightarrow 0,\;\;\forall k\in I_{\mathcal{S}%
}.
\end{equation*}
This further leads to 
\begin{equation}
\left| \beta _{n}y_{n}(a_{k})\right| \longrightarrow 0,\;\;\forall k\in I_{%
\mathcal{S}}.  \label{2.18}
\end{equation}
due to (\ref{2.12}) and the trace theorem.

\noindent We have also 
\begin{equation}
\dfrac{dy^{j_{k}}}{dx}(a_{k})\longrightarrow 0,\;\;\forall k\in I_{\mathcal{S%
}}.  \label{2.19}
\end{equation}

\textit{Third step.} Substituting (\ref{2.12}) into (\ref{2.13}) and (\ref
{2.14}) into (\ref{2.15}) to get 
\begin{equation}
-\beta _{n}^{2}y_{n}^{j}-\dfrac{d^{2}y_{n}^{j}}{dx^{2}}=g_{n}^{j}+\mathbf{i}%
\beta _{n}f_{n}^{j},\;\;j\in J  \label{2.20}
\end{equation}
\begin{equation}
-\beta _{n}^{2}p_{k,n}+v_{n}(a_{k})=r_{k,n}+\mathbf{i}\beta _{n}h_{k,n}.
\label{2.21}
\end{equation}
Next, we take the inner product of (\ref{2.20}) with $b\dfrac{dy_{n}^{j}}{dx}
$ in $L^{2}(0,\ell _{j})$ for $b\in \mathcal{C}^{1}([0,\ell _{j}])$ we get 
\begin{multline*}
\frac{1}{2}\beta _{n}^{2}\left[ \left| y_{n}^{j}(x)\right| ^{2}b(x)\right]
_{0}^{\ell _{j}}+\frac{1}{2}\left[ \left| \dfrac{dy_{n}^{j}}{dx}(x)\right|
^{2}b(x)\right] _{0}^{\ell _{j}}+\left[ Re\left( \mathbf{i}\beta
_{n}f_{n}^{j}(x)\overline{y_{n}^{j}}(x)b(x)\right) \right] _{0}^{\ell _{j}}
\\
-\frac{1}{2}\int_{0}^{\ell _{j}}\left( \left| \beta _{n}y_{n}^{j}\right|
^{2}+\left| \dfrac{dy_{n}^{j}}{dx}\right| ^{2}\right) \dfrac{db}{dx}%
dx\longrightarrow 0.
\end{multline*}
With (\ref{2.18}) and (\ref{2.19}), this leads to 
\begin{equation*}
\frac{1}{2}\int_{0}^{\ell _{j_{k}}}\left( \left| \beta
_{n}y_{n}^{j_{k}}\right| ^{2}+\left| \dfrac{dy_{n}^{j_{k}}}{dx}\right|
^{2}\right) dx\longrightarrow 0
\end{equation*}
for every $k$ in $I_{\mathcal{S}},$ by taking $b=x$ or $b=\ell _{j_{k}}-x.$
Moreover, as in \cite{Far13} it follows that 
\begin{equation*}
\beta _{n}y_{n}(a_{s})\longrightarrow 0,\;\;\dfrac{dy_{n}^{j_{k}}}{dx}%
(a_{s})\longrightarrow 0,\;\;\text{and \ }Re\left( i\beta _{n}f_{n}(a_{s})%
\overline{y_{n}}(a_{s})\right) \longrightarrow 0
\end{equation*}
where $a_{s}$ is the end of $e_{j_{k}},$ different from $a_{k}.$ We then
conclude by iteration, that for every $j$ in $J,$ 
\begin{equation*}
\int_{0}^{\ell _{j}}\left( \left| \beta _{n}y_{n}^{j}\right| ^{2}+\left|
\partial _{x}y_{n}^{j}\right| ^{2}\right) dx\longrightarrow 0.
\end{equation*}

Finally, in view of (\ref{2.12}), we also get 
\begin{equation*}
\left\| v_{n}^{j}\right\| \longrightarrow 0,\;\;\text{for }j\in J.
\end{equation*}
which implies that $\left\| z_{n}\right\| _{H}\longrightarrow 0:$ clearly
contradicts (\ref{2.11}).
\end{proof}


\section{Some other cases} \label{sec4}

In this part we consider two particular cases. In the first, (Figure \ref
{figure2}), there is a circuit in the graph. We prove that, even with much
more controls, the exponential stability fails. The second case (Figure \ref
{figure3}) proves that when we eliminate a control of a leaf in the initial
case then the exponential stability fails. 
\begin{figure}[tbp]
\centering
\begin{minipage}[t]{7cm}
\centering
\includegraphics[width=8cm, keepaspectratio =true]{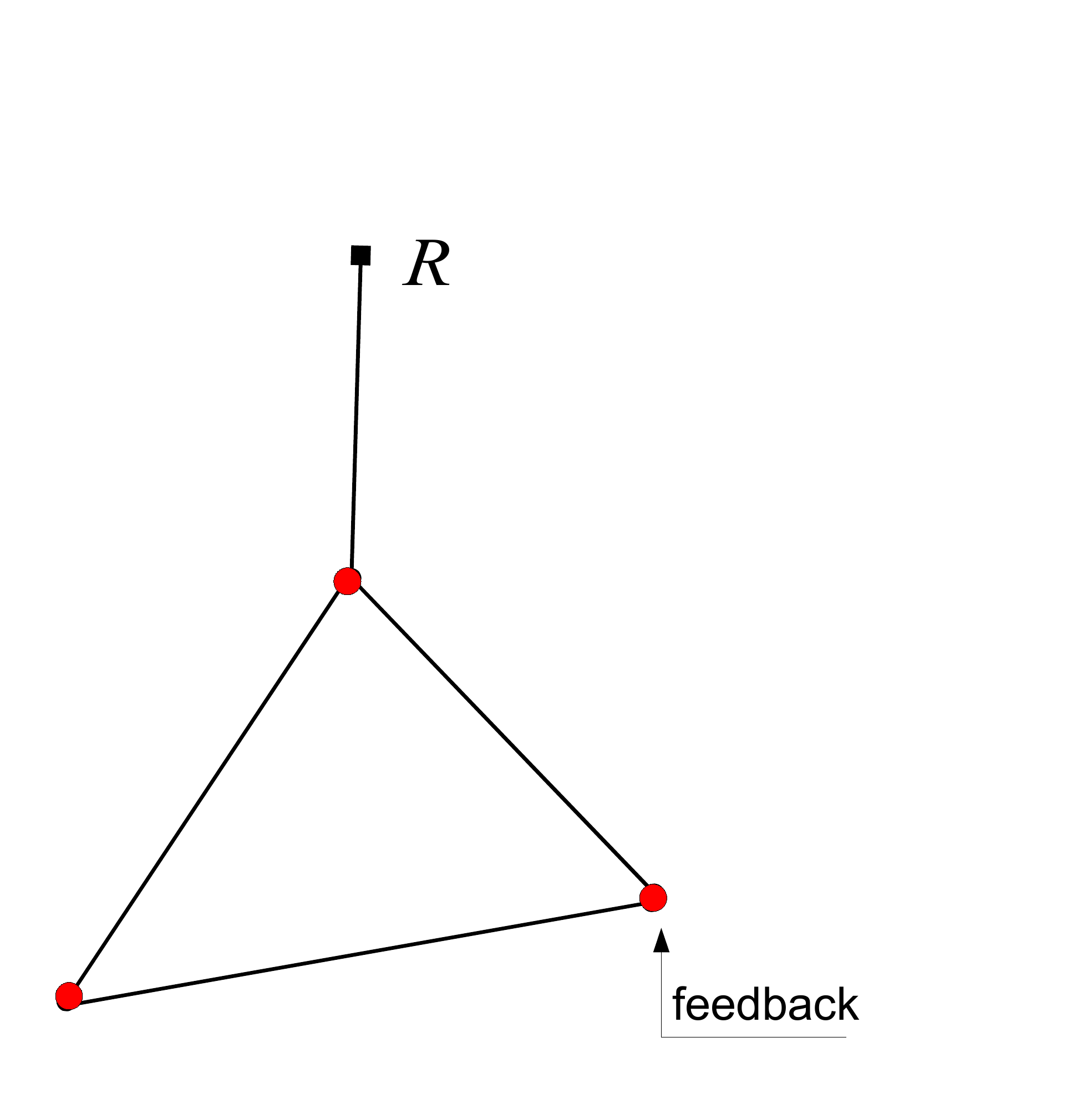}
\caption{Circuit}
\label{figure2}
\hfill
\end{minipage}
\begin{minipage}[t]{7cm}
\centering
\includegraphics[width=8cm, keepaspectratio =true]{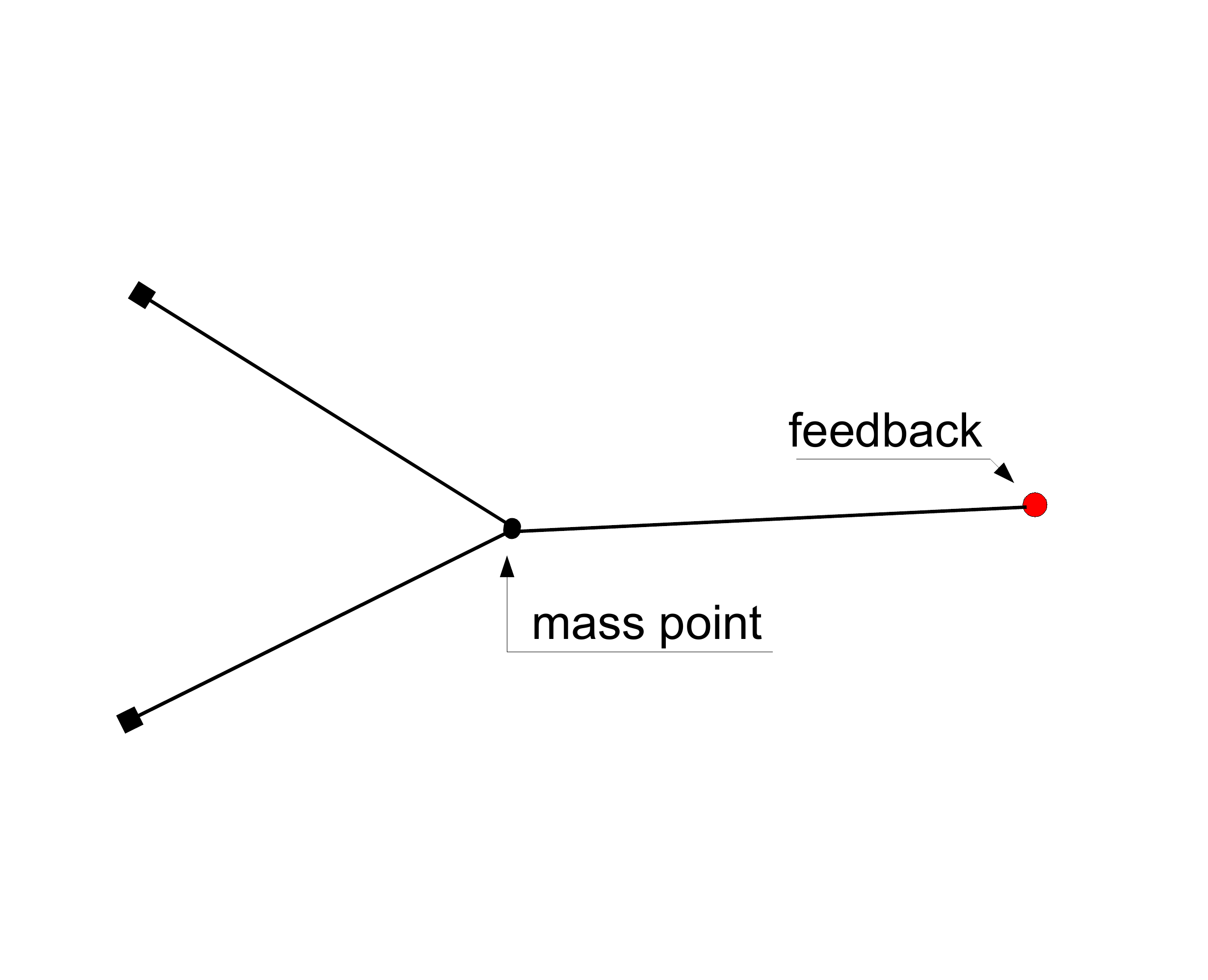}
\caption{Start}
\label{figure3}
\end{minipage}
\end{figure}

\subsection{A circuit}

(Figure \ref{figure2})

In this part we suppose that $\mathcal{T}$ contains a circuit (Figure \ref
{figure2} ), with feedbacks at each inner node. Then the second equation in (%
\ref{1.1}) will be 
\begin{equation*}
\sum\limits_{j\in J_{k}}d_{kj}\partial _{x}y^{j}(a_{k},t)=s_{1}^{\prime
}(t)-y_{t}(a_{k},t),\;\;a_{k}\in I_{\mathcal{M}}
\end{equation*}
and we can rewrite the system (\ref{1.1}) in the Hilbert space $H$ as 
\begin{equation*}
z^{\prime }(t)=\mathcal{A}z(t).
\end{equation*}
Except that, here, the first equation in (\ref{1.33}) will be replaced by
the following 
\begin{equation*}
\sum_{j\in J_{k}}d_{kj}\dfrac{dy^{j}}{dx}(a_{k})=q_{k}-y_{t}(a_{k}),\;%
\forall k\in I_{\mathcal{M}}.
\end{equation*}

The operator $\mathcal{A}$ generates a $\mathcal{C}_{0}$-semigroup of
contraction $(S(t))_{t\geq 0}$.

Next we suppose, without loss of generality, that $\ell _{1}=\ell _{2}=\ell
_{3}=1.$ We have the first result of asymptotic behavior

\begin{theorem}
$(S(t))_{t \leq 0}$ is asymptotically stable if and only if $\ell _{4}$ is irrational
and not in $\pi \mathbb{Z}.$
\end{theorem}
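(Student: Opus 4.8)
The plan is to use the standard spectral characterization of asymptotic stability for a $C_0$-semigroup of contractions with compact resolvent: $(S(t))_{t\geq 0}$ is asymptotically stable if and only if $\sigma(\mathcal{A})\cap \mathbf{i}\mathbb{R}=\emptyset$ (no eigenvalue of $\mathcal{A}$ lies on the imaginary axis, since $\mathcal{A}$ has compact resolvent and generates a contraction semigroup). So the whole statement reduces to showing: $\mathbf{i}\beta$ is an eigenvalue of $\mathcal{A}$ for some $\beta\in\mathbb{R}$ if and only if $\ell_4$ is rational or $\ell_4\in\pi\mathbb{Z}$. I would set up the eigenvalue equation $\mathcal{A}z=\mathbf{i}\beta z$ exactly as in the proof of Lemma \ref{lem2}: from the dissipation identity one gets $v(a_k)=0$ hence $dy/dx(a_k)=0$ at every controlled leaf, and on each edge $y^j=\alpha_1^j\sin(\beta x)+\alpha_2^j\cos(\beta x)$ (treating $\beta=0$ and $\beta^2=1$ as easy special cases just as before, where the circuit hypothesis still forces $y\equiv0$ unless an edge has length in $\pi\mathbb{N}$).

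The new feature is the circuit on edges $e_1,e_2,e_3$ of length $1$, together with the pendant edge $e_4$ of length $\ell_4$. I would parametrize the circuit carefully using the incidence matrix, impose at the (two or three) vertices of the circuit the continuity of $y$ and the modified Kirchhoff conditions $\sum_{j\in J_k}d_{kj}\,dy^j/dx(a_k)=q_k-\mathbf{i}\beta y(a_k)$ together with $q_k=\mathbf{i}\beta p_k$ and $(\beta^2-1)p_k=v(a_k)=\mathbf{i}\beta y(a_k)$, which eliminate $p_k,q_k$ and give a homogeneous relation among the boundary values of $y$ and $dy/dx$ at each inner node. At the leaf $a_k$ attached to $e_4$ one has the damping condition forcing $y^{j_k}(a_k)=0$ and $dy^{j_k}/dx(a_k)=0$, so $y^4$ satisfies $y^4(\ell_4)=0=dy^4/dx(\ell_4)$ if $\ell_4$ is the leaf end; at the node joining $e_4$ to the circuit we match $y^4$ and its derivative to the circuit data. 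The point is to reduce everything to a finite linear system in the coefficients $\alpha_1^j,\alpha_2^j$ whose determinant is an explicit trigonometric function of $\beta$ and $\ell_4$; a nontrivial eigenvector exists iff this determinant vanishes for some real $\beta$. I expect the algebra to collapse (using $\ell_1=\ell_2=\ell_3=1$ and the symmetry of the three circuit edges) to a condition of the shape ``$\sin\beta=0$ and $\sin(\beta\ell_4)=0$ have a common nonzero root,'' i.e. $\beta\in\pi\mathbb{Z}^*$ and $\beta\ell_4\in\pi\mathbb{Z}$, equivalently $\ell_4\in\mathbb{Q}$; and separately the degenerate possibility $\ell_4\in\pi\mathbb{Z}$ producing a standing wave supported on $e_4$ alone (the explicit eigenvector already noted in the text: $y^4=\mathbf{i}\sin x$ on an edge of length $m\pi$ not attached to a leaf — here $e_4$ plays that role since its far end is the circuit node, not a controlled leaf).

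For the ``only if'' direction I would exhibit the eigenvector explicitly when $\ell_4=p/q\in\mathbb{Q}$: take $\beta=q\pi$ (or the least common choice making both $\sin\beta$ and $\sin(\beta\ell_4)$ vanish), build $y^4$ as a sine on $e_4$ vanishing with its derivative at the circuit node, and extend by an antisymmetric standing wave around the circuit so that all Kirchhoff and continuity conditions hold with $p_k,q_k$ chosen via $q_k=\mathbf{i}\beta p_k$, $(\beta^2-1)p_k=\mathbf{i}\beta y(a_k)$ — checking that the $s_k$-equation is consistent. When $\ell_4\in\pi\mathbb{Z}$ the standing wave living only on $e_4$ does the job directly. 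For the ``if'' direction (asymptotic stability when $\ell_4$ is irrational and $\notin\pi\mathbb{Z}$) I would run the elimination to show the determinant has no real zero: the damping at the controlled leaves kills the corresponding pendant edges by the iteration in Lemma \ref{lem2}, propagating to force $y\equiv0$ on all edges of the circuit, and then the matching to $e_4$ forces $y^4(\ell_4)=dy^4/dx(\ell_4)=0$ at the circuit node while also $y^4$ must vanish at the outer end; a nonzero sine solution on $[0,\ell_4]$ with a double zero at one endpoint is impossible, so $y\equiv0$ everywhere and then $p_k=q_k=0$.

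The main obstacle I anticipate is the bookkeeping at the circuit vertices: unlike a tree, the three edges $e_1,e_2,e_3$ form a cycle, so orientations (signs $d_{kj}$) and the two shared vertices must be handled with care, and one must correctly incorporate the \emph{modified} inner-node condition $\sum d_{kj}\,dy^j/dx(a_k)=q_k-\mathbf{i}\beta y(a_k)$ that the circuit model introduces. Getting the reduced determinant in a clean enough form to read off the number-theoretic condition — and making sure no spurious cancellation hides a real root when $\ell_4$ is irrational — is the delicate part; everything else is the routine adaptation of the arguments already given for Lemma \ref{lem2} and Theorem \ref{th32}.
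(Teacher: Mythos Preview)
Your overall strategy (compact resolvent plus the Arendt--Batty/Lyubich--V\~u criterion, reducing the question to the absence of imaginary eigenvalues) is correct and matches the paper. But your execution rests on a misreading of the circuit model, and the argument you sketch would not go through.

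The key point you miss is that in this subsection the feedbacks have been moved to the \emph{inner} nodes: the transmission condition is now $\sum_{j\in J_k}d_{kj}\,y^j_x(a_k,t)=s_k'(t)-y_t(a_k,t)$ for $k\in I_{\mathcal M}$, and there are \emph{no} damped leaves at all (the only exterior vertex is the Dirichlet root, reached via $e_1$; the circuit is formed by $e_2,e_3,e_4$, not by $e_1,e_2,e_3$). Consequently the dissipation identity reads $\mathrm{Re}\langle\mathcal A z,z\rangle=-\sum_{k\in I_{\mathcal M}}|v(a_k)|^2$, so any imaginary eigenvector satisfies $v(a_k)=0$, hence $y(a_k)=0$, at every \emph{interior} vertex. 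Your plan to ``kill the pendant edges from the controlled leaves and propagate inward'' cannot start, because there is no controlled leaf; and your proposed eigenvector when $\ell_4\in\mathbb Q$ (``a sine on $e_4$ vanishing with its derivative at the circuit node'') is identically zero, since a solution of $y''+\beta^2y=0$ with a double zero vanishes everywhere.

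Once you use the correct dissipation identity, the paper's argument is immediate and much simpler than what you outline: $y(a_k)=0$ at all inner nodes together with Dirichlet at the root force $y^j=\alpha_j\sin(\beta x)$ on every edge, with $\alpha_j\sin(\beta\ell_j)=0$. For $\beta^2=1$ one needs $\sin\ell_4\neq 0$, i.e.\ $\ell_4\notin\pi\mathbb Z$, to kill $\alpha_4$ and then the Kirchhoff conditions give $p=q=0$. For $\beta^2\neq 1$ one has $p_k=q_k=0$ directly; if $\sin\beta\neq 0$ all $\alpha_j$ vanish, while if $\beta=n\pi$ the irrationality of $\ell_4$ forces $\alpha_4=0$ and then the Kirchhoff relations at the two bivalent circuit nodes kill $\alpha_2,\alpha_3$, hence $\alpha_1$. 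Conversely, $\ell_4=a/b\in\mathbb Q$ gives the eigenvalue $\mathbf i b\pi$ (four unknowns $\alpha_j$, only three Kirchhoff constraints), and $\ell_4\in\pi\mathbb Z$ gives the eigenvalue $\mathbf i$ by the standing wave on $e_4$. So the number-theoretic condition appears without any determinant computation; the ``delicate'' reduction you anticipated is unnecessary once the damping is placed correctly.
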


\begin{proof}
First, if $\ell _{4}$ is in $\pi \mathbb{Z}$ then $\mathbf{i}$ is an
eigenvalue of $\mathcal{A}$ (as in the case of a tree) and if $\ell _{4}=%
\frac{a}{b}$ with $a$ and $b$ integer, then $\mathbf{i}b\pi $ is an
eigenvalue of $\mathcal{A}$.

Now, we suppose that $\ell _{4}\notin \mathbb{Q}$ and $\ell _{4}\notin \pi 
\mathbb{Z}.$ We only need to prove that $\mathbf{i}\mathbb{R\subset \rho (%
\mathcal{A})}.$ For this, we will prove that the equation 
\begin{equation}
\mathcal{A}z=\mathbf{i}\beta z  \label{2.61}
\end{equation}
with $z=\left( 
\begin{array}{c}
y \\ 
v \\ 
p \\ 
q
\end{array}
\right) \in \mathcal{D}(\mathcal{A})$ and $\beta \in \mathbb{R}$ has only
trivial solution.

we can show that $v(a_{k})=0$ for $k\in I_{\mathcal{M}}.$ Now the equation (%
\ref{2.61}) can be rewritten explicitly as 
\begin{eqnarray}
v^{j} &=&\mathbf{i}\beta y^{j},\;\;j\in J,  \label{sss} \\
\dfrac{d^{2}y^{j}}{dx^{2}} &=&\mathbf{i}\beta v^{j},\;\;j\in J,  \notag \\
q_{k} &=&\mathbf{i}\beta p_{k},\;\;k\in I_{\mathcal{M}},  \label{qq} \\
-p_{k}-v(a_{k}) &=&\mathbf{i}\beta q_{\kappa },\;\;k\in I_{\mathcal{M}}. 
\notag
\end{eqnarray}

\noindent If $\beta =0$ then as in for the initial example we show that $%
y=0. $

Next, we suppose that $\beta \neq 0.$ We have $y(a_{k})=0$ for every $k\in
I_{\mathcal{M}}$ 
\begin{eqnarray}
\beta ^{2}y^{j}+\dfrac{d^{2}y^{j}}{dx^{2}} &=&0,\;\;j\in J,  \label{eq1} \\
(\beta ^{2}-1)p_{k} &=&0,\;\;k\in I_{\mathcal{M}}.  \label{eq22}
\end{eqnarray}
Then, since $y(a_{k})=0$ for every $k$ then $y^{j}$ is of the form 
\begin{equation*}
y^{j}=\alpha _{j}\sin (\beta x).
\end{equation*}
As in the case of a tree we consider two cases: $\beta ^{2}=1$ and $\beta
^{2}\neq 1$

\textit{First case:} $\beta ^{2}=1$. we have $\alpha _{1}=\alpha _{2}=\alpha
_{3}=0$ and since $\ell _{4}\notin \pi \mathbb{Z}$, $\alpha _{4}=0.$ Return
back to the balance conditions and (\ref{qq}), at inner nodes, one can
deduce that $q=p=0$ and hence $y=0.$

\textit{Second case: }$\beta ^{2}\neq 1.$ we have $p_{k}=q_{k}=0$ for every $%
k$ in $I_{\mathcal{M}}.$ Then $y=0.$
\end{proof}

\begin{theorem}
The semigroup $(S(t))_{t\geq 0}$ is not exponentially stable, even if $\ell
_{4}$ is irrational and not in $\pi \mathbb{Z}.$
\end{theorem}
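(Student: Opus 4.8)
The plan is to exhibit a sequence of approximate eigenfunctions concentrated on the circuit edges that violates the resolvent bound \eqref{2.10}, following exactly the scheme used in the proof of Theorem~\ref{th32} but run in reverse. Since the three edges $e_1,e_2,e_3$ forming the circuit have length $1$ and are not attached to leaves, the \emph{rational} commensurability among them is what allows standing waves to persist; the point is that the dissipation acts only at the exterior leaves and at the interior point masses through the term $v(a_k)$, and on a closed loop one can build oscillations that are invisible to both mechanisms in the high-frequency limit.

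Concretely, I would first choose a sequence $\beta_n\to\infty$ so that $\sin(\beta_n)\to 0$ (e.g.\ $\beta_n = n\pi$, or a subsequence adapted to $\ell_4$), and look for $z_n=(y_n,v_n,p_n,q_n)$ of the form $y_n^j = \alpha_j^{(n)}\sin(\beta_n x)$ on the circuit edges with $y_n^j\equiv 0$ on all other edges, $v_n^j = \mathbf{i}\beta_n y_n^j$, and $p_{k,n}=q_{k,n}=0$ at all interior vertices. With this ansatz $-\beta_n^2 y_n^j - \frac{d^2 y_n^j}{dx^2}=0$ holds exactly, so $g_n^j + \mathbf{i}\beta_n f_n^j$ vanishes on each circuit edge; the continuity condition at the interior vertices of the loop forces linear relations among the $\alpha_j^{(n)}$, which are solvable (the loop has a one-dimensional space of such standing-wave data once $\sin\beta_n\approx 0$). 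The only places where $(\mathbf{i}\beta_n - \mathcal{A})z_n$ is nonzero are: the Kirchhoff-type balance laws at the interior vertices of the circuit, which produce terms of order $\beta_n|\alpha_j^{(n)}\cos(\beta_n)|$ plus the feedback contribution $v_n(a_k) = \mathbf{i}\beta_n\alpha_j^{(n)}\sin(\beta_n)$; and the equations \eqref{2.14}--\eqref{2.15} at the point masses, which give $r_{k,n} = v_n(a_k)$. Normalizing so that $\|z_n\|_H=1$ (which amounts to $\sum_j |\alpha_j^{(n)}|^2\,\beta_n^2 \approx 2$), all these residual terms are $O(\beta_n\sin\beta_n) + O(\cos\beta_n)$; choosing $\beta_n$ so that additionally $\beta_n\sin\beta_n\to 0$ (possible along a suitable sequence, e.g.\ $\beta_n$ close to $n\pi$ from Diophantine considerations, or simply $\beta_n=n\pi$ which makes $\sin\beta_n=0$ exactly) makes $\|(\mathbf{i}\beta_n-\mathcal{A})z_n\|_H\to 0$ while $\|z_n\|_H=1$, contradicting \eqref{2.10}.

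The cleanest choice is $\beta_n = n\pi$: then $\sin(\beta_n x)$ vanishes at $x=0$ and $x=1$, so $y_n^j$ satisfies the Dirichlet/continuity conditions at both endpoints of each circuit edge trivially, $v_n(a_k)=0$ exactly, and the only nonzero residual is $\frac{dy_n^j}{dx}(a_k) = \pm n\pi(-1)^n\alpha_j^{(n)}$ entering the balance law. To kill this too one adjusts the $\alpha_j^{(n)}$ so that the \emph{signed} sum $\sum_{j\in J_k} d_{kj}\frac{dy_n^j}{dx}(a_k)$ cancels at each interior vertex of the triangle: with three edges of equal length meeting pairwise, the orientation signs $d_{kj}$ give exactly the cycle condition, and one verifies a nonzero solution $(\alpha_1^{(n)},\alpha_2^{(n)},\alpha_3^{(n)})$ exists (independent of $n$). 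After this all four components of $(\mathbf{i}\beta_n-\mathcal{A})z_n$ are \emph{identically zero}, so in fact $z_n$ is a genuine eigenvector of $\mathcal{A}$ with eigenvalue $\mathbf{i}n\pi$ — which is even stronger than needed, but one must double-check it lies in $\mathcal{D}(\mathcal{A})$ (the modified domain with the extra feedback term) and that it is nonzero, i.e.\ that the homogeneous balance system on the triangle is degenerate. If for the particular orientation pattern in Figure~\ref{figure2} the exact standing wave does not close up, one falls back on the approximate version: keep $y_n^j=\alpha_j\sin(n\pi x)$ and absorb the small boundary mismatch $O(n)$ into $f_n^j$ by a compactly-supported correction of $H^2$-norm $o(1/n)$, so that $\beta_n f_n^j \to 0$ in $L^2$.

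The main obstacle is the bookkeeping at the interior vertices of the circuit: one must check that the linear system imposed by continuity of $y_n$ and by the (modified, feedback-augmented) Kirchhoff balance \emph{simultaneously} admits a nontrivial high-frequency solution supported only on the loop, and that this solution does not leak into edge $e_4$ or the rest of the tree. This is a finite-dimensional linear-algebra verification depending only on the incidence signs $d_{kj}$ around the triangle; the equal lengths $\ell_1=\ell_2=\ell_3=1$ are exactly what makes it solvable, and the irrationality of $\ell_4$ is irrelevant here (indeed the theorem explicitly says exponential stability fails regardless of $\ell_4$), because the obstruction already lives entirely on the circuit. A secondary, routine check is that $\|z_n\|_H$ stays bounded below away from zero after normalization, which follows since the $H$-norm of $z_n$ is dominated by $\beta_n\|y_n\|_{L^2}\asymp \beta_n\bigl(\sum_j|\alpha_j|^2\bigr)^{1/2}$ and the $\alpha_j$ are fixed.
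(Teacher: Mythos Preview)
Your approach has a genuine gap, and it stems from a misreading of the geometry. From the transmission conditions written out in the paper's proof (in particular $b_1=b_2=b_3$, meaning $y^1(0)=y^2(0)=y^3(0)$), edges $e_1,e_2,e_3$ all emanate from the \emph{same} interior vertex; $e_1$ runs to the root $\mathcal{R}$, while $e_2$ and $e_3$ go to two further interior vertices which are joined by $e_4$. Thus the circuit is formed by $e_2,e_3,e_4$, with lengths $1,1,\ell_4$, and $e_1$ is \emph{not} part of the loop. Your triangle of three unit-length edges does not exist in this graph.

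This matters because the whole strategy of a closed standing wave on the loop collapses once one edge of the cycle has irrational length. With $\beta_n=n\pi$ and the ansatz $y_n^j=\alpha_j\sin(n\pi x)$, continuity at the vertex joining $e_3$ and $e_4$ forces $\alpha_4\sin(n\pi\ell_4)=0$, hence $\alpha_4=0$; then the balance condition at the vertex joining $e_2$ and $e_4$ (with $q_k=0$, $v(a_k)=0$) reads $(-1)^n n\pi\,\alpha_2=0$, and similarly $\alpha_3=0$. So the only data in $\mathcal D(\mathcal A)$ of this form are trivial. More to the point, your claim that ``$z_n$ is a genuine eigenvector of $\mathcal A$ with eigenvalue $\mathbf{i}n\pi$'' directly contradicts the theorem proved immediately before this one, which shows $\mathbf{i}\mathbb{R}\subset\rho(\mathcal A)$ when $\ell_4\notin\mathbb{Q}$ and $\ell_4\notin\pi\mathbb{Z}$. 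Your fallback of absorbing the derivative mismatch into a compactly supported $H^2$-correction does not work either: the mismatch at each circuit vertex is of order $n\pi|\alpha_j|$, which after normalizing $\|z_n\|_H=1$ is of order $1$, not $o(1)$; correcting it forces either $z_n\notin\mathcal D(\mathcal A)$ or a residual of size bounded below in $H$.

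The paper's argument is of a different nature and is genuinely needed here. Precisely because no exact loop resonance exists, one must manufacture a \emph{near}-resonance: choose $\beta_n=2\pi q_n+2\pi q_n^{-1/4}$ where $(p_n,q_n)$ come from Dirichlet's theorem applied to $\ell_4$, so that $\sin\beta_n$ and $\sin(\beta_n\ell_4)$ are simultaneously small of controlled orders. One then takes a bounded right-hand side $f_n=(0,g_n,0,0)$ with $g_n$ supported on a single edge, solves $(\mathbf{i}\beta_n-\mathcal A)z_n=f_n$ explicitly for the coefficients $a_j,b_j$, and tracks the asymptotics to show $\beta_n b_1\sim c\,q_n^{1/4}\to\infty$. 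The irrationality of $\ell_4$ is not ``irrelevant'' as you suggest; it is exactly what the Diophantine step exploits, and it is what makes the lack of exponential stability subtle rather than a matter of exhibiting an imaginary eigenvalue.
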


\begin{proof}
To prove that $(S(t))_{t\geq 0}$ is not exponentially stable we consider the
sequence $f_{n}$ of vectors of $H$ defined by $f_{n}=(0,g_{n},0,0)$ where $%
g_{n}=(0,-\sin \beta _{n}x,0,0)$ and $\beta _{n}$ is a sequence of real
numbers satisfying $\beta _{n}\longmapsto +\infty $ and will be defined
later. We then prove that the sequence $z_{n}=(y_{n},v_{n},p_{n},q_{n})$ of
elements of $\mathcal{D}(\mathcal{A})$ such that 
\begin{equation*}
(\mathbf{i}\beta _{n}-\mathcal{A})z_{n}=f_{n}
\end{equation*}
is not bounded.

The sequence $y_{n}$ should satisfy 
\begin{equation*}
\left\{ 
\begin{tabular}{l}
$\beta _{n}^{2}y_{n}^{j}+\partial _{x}^{2}y_{n}^{j}=0,\;\;$for $j=1,3,4$ \\ 
$\beta _{n}^{2}y_{n}^{2}+\partial _{x}^{2}y_{n}^{2}=\sin \beta _{n}x.$%
\end{tabular}
\right.
\end{equation*}
Then, for $j=1,3,4$ there exists two complex numbers $a_{j}$ and $b_{j}$
such that 
\begin{equation*}
\left\{ 
\begin{tabular}{l}
$y_{n}^{j}=a_{j}\sin (\beta _{n}x)+b_{j}\cos (\beta _{n}x),$ \\ 
$\frac{dy_{n}^{j}}{dx}=-\beta _{n}b_{j}\sin (\beta _{n}x)+\beta
_{n}a_{j}\cos (\beta _{n}x)$%
\end{tabular}
\right.
\end{equation*}
and there exists $a_{2}$ and $b_{2}$ in $\mathbb{C}$ such that 
\begin{equation*}
\left\{ 
\begin{tabular}{l}
$
\begin{tabular}{l}
$y_{n}^{2}=a_{2}\sin (\beta _{n}x)+(-\frac{x}{2\beta _{n}}+b_{2})\cos (\beta
_{n}x),$ \\ 
$\frac{dy_{n}^{2}}{dx}=(\frac{x}{2}-\beta _{n}b_{2})\sin (\beta _{n}x)+(-%
\frac{1}{2\beta _{n}}+\beta _{n}a_{2})\cos (\beta _{n}x).$%
\end{tabular}
$%
\end{tabular}
\right.
\end{equation*}
The boundary and transmission conditions are expressed as follows 
\begin{equation*}
\left\{ 
\begin{tabular}{l}
$a_{1}\sin (\beta _{n})+b_{1}\cos (\beta _{n})=0$ \\ 
$b_{1}=b_{2}=b_{3}$ \\ 
$-\frac{1}{2\beta _{n}}+\beta _{n}a_{1}+\beta _{n}a_{2}+\beta
_{n}a_{3}=-i\beta _{n}b_{1}$ \\ 
$a_{2}\sin (\beta _{n})+(-\frac{1}{2\beta _{n}}+b_{2})\cos (\beta
_{n})=b_{4} $ \\ 
$\beta _{n}a_{4}-\left( (\frac{1}{2}-\beta _{n}b_{2})\sin (\beta _{n})+(-%
\frac{1}{2\beta _{n}}+\beta _{n}a_{2})\cos (\beta _{n})\right) =-i\beta
_{n}b_{4}$ \\ 
$a_{3}\sin (\beta _{n})+b_{1}\cos (\beta _{n})=a_{4}\sin (\beta _{n}\ell
_{4})+b_{4}\cos (\beta _{n}\ell _{4})$ \\ 
$-\beta _{n}b_{3}\sin (\beta _{n})+\beta _{n}a_{3}\cos (\beta _{n})-\beta
_{n}b_{4}\sin (\beta _{n}\ell _{4})+\beta _{n}a_{4}\cos (\beta _{n}\ell
_{4})=$ \\ 
$i\beta _{n}(a_{3}\sin (\beta _{n})+b_{1}\cos (\beta _{n})).$%
\end{tabular}
\right.
\end{equation*}
Then 
\begin{equation*}
\left\{ 
\begin{tabular}{l}
$a_{1}\sin (\beta _{n})+b_{1}\cos (\beta _{n})=0$ \\ 
$-\frac{1}{2\beta _{n}}+\beta _{n}a_{1}+\beta _{n}a_{2}+\beta
_{n}a_{3}=-i\beta _{n}b_{1}$ \\ 
$a_{2}\sin (\beta _{n})+(-\frac{1}{2\beta _{n}}+b_{1})\cos (\beta
_{n})=b_{4} $ \\ 
$\beta _{n}a_{4}-\left( (\frac{1}{2}-\beta _{n}b_{1})\sin (\beta _{n})+(-%
\frac{1}{2\beta _{n}}+\beta _{n}a_{2})\cos (\beta _{n})\right) =-i\beta
_{n}b_{4}$ \\ 
$a_{3}\sin (\beta _{n})+b_{1}\cos (\beta _{n})=a_{4}\sin (\beta _{n}\ell
_{4})+b_{4}\cos (\beta _{n}\ell _{4})$ \\ 
$\beta _{n}a_{3}e^{-\mathbf{i}\beta _{n}}-\beta _{n}b_{4}\sin (\beta
_{n}\ell _{4})+\beta _{n}a_{4}\cos (\beta _{n}\ell _{4})=i\beta _{n}b_{1}e^{-%
\mathbf{i}\beta _{n}}.$%
\end{tabular}
\right.
\end{equation*}
Hence 
\begin{equation}
(FB+AG)\beta _{n}b_{1}=AH-F\beta _{n}C  \label{eqcir}
\end{equation}
where 
\begin{eqnarray*}
A &=&\left( 1+\cos (\beta _{n}\ell _{4})\right) \sin (\beta _{n})+e^{-%
\mathbf{i}\beta _{n}}\sin (\beta _{n}\ell _{4}), \\
B &=&(2-\cos (\beta _{n}\ell _{4})\cos (\beta _{n})+\mathbf{i}\left( e^{-%
\mathbf{i}\beta _{n}}\sin (\beta _{n}\ell _{4})-\sin (\beta _{n})\right) , \\
C &=&\frac{1}{2\beta _{n}^{2}}\sin (\beta _{n})-(\frac{1}{2}\sin (\beta
_{n})+\frac{1}{2}(-1+\mathbf{i})\cos (\beta _{n}))\sin (\beta _{n}\ell _{4})
\\
&+&\frac{1}{2\beta _{n}}\cos (\beta _{n})\cos (\beta _{n}\ell _{4})
\end{eqnarray*}
and 
\begin{eqnarray*}
F &=&e^{-\mathbf{i}\beta _{n}}\left( \cos \beta _{n}-1\right) -\sin (\beta
_{n})\sin (\beta _{n}\ell _{4}), \\
G &=&e^{-\mathbf{i}\beta _{n}}\left( \cot \beta _{n}-2\mathbf{i}\right)
-\cos (\beta _{n})\sin (\beta _{n}\ell _{4})-\mathbf{i}e^{-\mathbf{i}\beta
_{n}}\cos (\beta _{n}\ell _{4}), \\
H &=&-\frac{1}{2\beta _{n}}e^{-\mathbf{i}\beta _{n}}-\frac{1}{2\beta _{n}}%
\cos (\beta _{n})\sin (\beta _{n}\ell _{4})-\frac{1}{2}\sin (\beta _{n})\cos
(\beta _{n}\ell _{4}) \\
&-&\frac{1}{2}(-1+\mathbf{i})\cos (\beta _{n})\cos (\beta _{n}\ell _{4}).
\end{eqnarray*}
Now by using the Asymptotic Dirichlet's theorem \cite{Shm80}, there exists $%
(p_{n},q_{n})\in \mathbb{N}^{2}$ such that $\frac{p_{n}}{q_{n}}$ converge to 
$\ell _{4},$ $p_{n}$ and $q_{n}$ tend to infinity as $n$ goes to infinity
and for every $n$ in $\mathbb{N}$ 
\begin{equation*}
\left| q_{n}\ell _{4}-p_{n}\right| <\frac{1}{q_{n}}.
\end{equation*}
Take $\beta _{n}=2\pi q_{n}+\frac{2\pi }{q_{n}^{1/4}},$ then there exists a
positive integer $n_{0}$ such that for every integer $n\geq n_{0},$ 
\begin{equation*}
0<\lambda _{n}:=-\frac{2\pi }{q_{n}}+\frac{2\pi \ell _{4}}{q_{n}^{1/4}}%
<\beta _{n}\ell _{4}-2\pi p_{n}<\mu _{n}:=\frac{2\pi }{q_{n}}+\frac{2\pi
\ell _{4}}{q_{n}^{1/4}}<\frac{\pi }{2}
\end{equation*}
and 
\begin{equation*}
\sin (\lambda _{n})<\sin (\beta _{n}\ell _{4})<\sin (\mu _{n}),\;\;\cos
(\lambda _{n})<\cos (\beta _{n}\ell _{4})<\cos (\mu _{n}).
\end{equation*}
Moreover $\sin (\beta _{n}\ell _{4})$ and $\cos (\beta _{n}\ell _{4})$
satisfy the following asymptotic approximations, 
\begin{equation*}
\sin (\beta _{n}\ell _{4})=\frac{2\pi \ell _{4}}{q_{n}^{1/4}}+o(\frac{1}{%
q_{n}^{1/4}}),\;\;\cos (\beta _{n}\ell _{4})=1+o(\frac{1}{q_{n}^{1/4}}).
\end{equation*}
We have also 
\begin{equation*}
\sin (\beta _{n})=\sin (\frac{2\pi }{q_{n}^{1/4}})=\frac{2\pi }{q_{n}^{1/4}}%
+o(\frac{1}{q_{n}^{1/4}}),\cos (\beta _{n})=1+o(\frac{1}{q_{n}^{1/4}}),\;%
\text{and }\cot (\beta _{n})=\frac{q_{n}^{1/4}}{2\pi }(1+o(\frac{1}{%
q_{n}^{1/4}})).
\end{equation*}
It follows that 
\begin{eqnarray*}
A &=&\frac{2\pi (2+\ell _{4})}{q_{n}^{1/4}}+o(\frac{1}{q_{n}^{1/4}}%
),\;\;\;B=1+o(1),\;\;\;\;C=(-1+\mathbf{i})\frac{\pi \ell _{4}}{q_{n}^{1/4}}%
+o(\frac{1}{q_{n}^{1/4}}), \\
F &=&-\frac{2\pi ^{2}(2\ell _{4}+1)}{q_{n}^{1/2}}+o(\frac{1}{q_{n}^{1/2}}%
),\;\;\;G=\frac{q_{n}^{1/4}}{2\pi }-4\mathbf{i}+o(1),\;\;\;H=-\frac{1}{2}(-1+%
\mathbf{i})+o(1).
\end{eqnarray*}
Return back to (\ref{eqcir}), we could write 
\begin{equation*}
(2+\ell _{4}+o(1))\beta _{n}b_{1}=o(1)-\beta _{n}\left( -(-1+\mathbf{i})%
\frac{2\pi ^{3}(2\ell _{4}+1)\ell _{4}}{q_{n}^{3/4}}+o(\frac{1}{q_{n}^{3/4}}%
)\right) \sim 2\ell _{4}(2\ell _{4}+1)(-1+\mathbf{i})\pi ^{3}q_{n}^{1/4}.
\end{equation*}
Hence 
\begin{equation*}
\beta _{n}b_{1}\sim 2\ell _{4}\frac{2\ell _{4}+1}{\ell _{4}+2}(-1+\mathbf{i}%
)\pi ^{3}q_{n}^{1/4}.
\end{equation*}

Which implies that $\left\| y_{n}^{1}\right\|$ converges to
infinity as $n$ goes to infinity and that consequently $z_n$ is not bounded.
\end{proof}

\begin{remark}
A small change in the proof leads to the conclusion that a polynomial
stability, can not be better than $\frac{1}{t^{2}}$ (by using a frequency
domain characterization of polynomial stability of a $\mathcal{C}_{0}$%
-semigroup of contraction due to Borichev and Tomilov \cite{TOM10})
Precisely we prove that the system is not $\frac{1}{t^{\alpha }}$%
-polynomially stable for every $\alpha $ in $(0,2).$
\end{remark}

\subsection{A start with two fixed endpoints}

(Figure \ref{figure3}) 
\begin{equation}
\left\{ 
\begin{tabular}{l}
$y_{tt}^{j}-y_{xx}^{j}=0$ in $(0,\ell _{j})\times (0,\infty ),\;\;j\in
\{1,2,3\},$ \\ 
\\ 
$\sum\limits_{j=1}^{3}y_{x}^{j}(0,t)=s^{\prime }(t),\;\;s^{\prime \prime
}(t)+s(t)=-y_{t}(0,t),$ \\ 
$y^{j}(0,t)=y^{l}(0,t),\;\;j,l\in \{1,2,3\},$ \\ 
$y^{2}(1,t)=y^{3}(1,t)=0,\;\;y_{x}^{1}(1,t)=-y_{t}(1,t),$ \\ 
\\ 
$s(0)=s_{0},\;\;s^{\prime }(0)=s_{1},$ \\ 
$y^{j}(x,0)=y_{0}^{j}(x),$ $\;y_{t}^{j}(x,0)=y_{1}^{j}(x),$ \ $x\in
(0,1),\;j\in \{1,2,3\}.$%
\end{tabular}
\right.  \label{3.3}
\end{equation}

We can rewrite the system (\ref{3.3}) in the Hilbert space $H$ as 
\begin{equation*}
z^{\prime }(t)=\mathcal{A}z(t),
\end{equation*}
where 
\begin{equation*}
H=V\times \prod_{j=1}^{3}L^{2}(0,\ell _{3})\times \mathbb{C}^{2}
\end{equation*}
with 
\begin{equation*}
V=\left\{ \Phi \in \prod_{j\in J}H^{1}(0,\ell _{j}),\;\Phi ^{2}(\ell
_{2})=\Phi ^{3}(\ell _{3})=0,\;\Phi ^{j}(0)=\Phi ^{l}(0),\;j,l\in
\{1,2,3\}\right\} .
\end{equation*}
and 
\begin{equation*}
\mathcal{A}\left( 
\begin{array}{c}
y \\ 
v \\ 
p \\ 
q
\end{array}
\right) =\left( 
\begin{array}{c}
v \\ 
\partial _{x}^{2}y \\ 
q \\ 
-p-v(0)
\end{array}
\right) ,\forall \left( 
\begin{array}{c}
y \\ 
v \\ 
p \\ 
q
\end{array}
\right) \in \mathcal{D}(\mathcal{A}),
\end{equation*}
with 
\begin{eqnarray*}
\mathcal{D}(\mathcal{A}) &=&\left\{ (y,v,p,q)\in \left[
\prod_{j=1}^{3}H^{2}(0,\ell _{j})\cap V\right] \times V\times \mathbb{C}%
^{2};\right. \\
&&\left. \sum_{j=1}^{3}\dfrac{dy^{j}}{dx}(0)=q,\;\text{ and }\dfrac{dy^{1}}{%
dx}(\ell _{1})=-v^{1}(\ell _{1})\right\} .
\end{eqnarray*}

The operator $\mathcal{A}$ generates a $\mathcal{C}_{0}$-semigroup of
contraction $(S(t))_{t\geq 0}$.

If we take $\ell _{1}=\ell _{2}=1$ then we have the following result.

\begin{theorem}
the semigroup $(S(t))_{t \geq 0}$ is asymptotically stable if and only if $%
\ell _{3}$ irrational and not in $\pi \mathbb{Z}.$\newline
Even if $\ell _{3}$ irrational and not in $\pi \mathbb{Z}$, the semigroup $%
(S(t))_{t \geq 0}$ is not exponentially stable.
\end{theorem}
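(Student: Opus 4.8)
The plan is to prove the two assertions separately, using exactly the techniques already developed for the tree case and for the circuit case.

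\textbf{Asymptotic stability.} First I would observe that, since $\mathcal{A}$ has compact resolvent, asymptotic stability is equivalent (by the Arendt--Batty criterion) to the absence of eigenvalues of $\mathcal{A}$ on the imaginary axis, so the task reduces to analysing $\mathcal{A}z = \mathbf{i}\beta z$ for $\beta \in \mathbb{R}$, $z=(y,v,p,q) \in \mathcal{D}(\mathcal{A})$. Taking the inner product with $z$ and using the dissipation identity $Re\langle \mathcal{A}z,z\rangle_H = -|v^1(\ell_1)|^2$ gives $v^1(\ell_1)=0$, hence $\frac{dy^1}{dx}(\ell_1)=0$ from the boundary condition at the controlled leaf. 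As in Lemma~\ref{lem2}, the case $\beta=0$ forces $y=0$ by the energy identity $\sum_j\|\frac{dy^j}{dx}\|^2=0$ together with the Dirichlet conditions at $a_2,a_3$ and the root-type node. For $\beta\neq 0$, on each edge $y^j = \alpha_j\sin(\beta x)+\gamma_j\cos(\beta x)$ satisfies $\beta^2 y^j + \frac{d^2y^j}{dx^2}=0$; combining $v(0)=\mathbf{i}\beta y(0)$ with $(\beta^2-1)p = v(0)$ and the balance condition, one distinguishes $\beta^2=1$ and $\beta^2\neq 1$. In the case $\beta^2=1$: from $y^2(1)=y^3(1)=0$ and the $\pi\mathbb{Z}$-avoidance of $\ell_1=\ell_2=1$ (namely $\sin\beta\neq 0$) together with $\frac{dy^1}{dx}(1)=0$ and $y^1(1)$ matching at the center, one propagates $y^j=0$; in the case $\beta^2\neq 1$ one gets $p=q=0$, hence $v(0)=0$, hence $y(0)=0$, and then $y^1$ is determined by $y^1(0)=0$ and $\frac{dy^1}{dx}(1)=0$, which has a nonzero solution precisely when $\cos\beta=0$, i.e. $\beta\in\frac{\pi}{2}+\pi\mathbb{Z}$; but then $y^2,y^3$ vanish by the $\pi$-avoidance condition on $\ell_3$ and the Dirichlet conditions at $a_2,a_3$, forcing the center value to vanish as well, a contradiction unless $y^1\equiv0$ too. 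Conversely, if $\ell_3\in\pi\mathbb{Z}$ then $\mathbf{i}$ is an eigenvalue (with $y^3=\sin x$ supported only on $e_3$, all other components adjusted as in the tree case), and if $\ell_3=a/b\in\mathbb{Q}$ then $\mathbf{i}b\pi$ is an eigenvalue built analogously.

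\textbf{Lack of exponential stability.} Here I would mimic the construction in the circuit case: pick a forcing term $f_n=(0,g_n,0,0)$ with $g_n=(0,-\sin(\beta_n x),0,0)$ concentrated on the edge $e_2$ (or on $e_1$), solve $(\mathbf{i}\beta_n - \mathcal{A})z_n = f_n$ explicitly edge by edge, and choose $\beta_n\to\infty$ so that $\|z_n\|_H\to\infty$ while $\|f_n\|_H$ stays bounded; this contradicts the resolvent boundedness criterion of Huang--Pr\"uss for exponential decay. Concretely, on edges $1$ and $3$ the solution is a combination $a_j\sin(\beta_n x)+b_j\cos(\beta_n x)$, while on edge $2$ a particular solution introduces a secular term $-\frac{x}{2\beta_n}\cos(\beta_n x)$; writing out the four-node transmission/boundary conditions (continuity at the center, the mass-node balance $\sum_j\frac{dy^j}{dx}(0)=q$ with $q$ eliminated via $\mathbf{i}\beta_n p - q = 0$ and $\mathbf{i}\beta_n q + p + v(0)=0$, the Dirichlet conditions $y^2(1)=y^3(1)=0$, and the Robin-type condition $\frac{dy^1}{dx}(1)=-v^1(1)=-\mathbf{i}\beta_n y^1(1)$) yields a linear system for the coefficients. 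I would then choose $\beta_n$ along a subsequence where $\sin(\beta_n\ell_3)$ is small—using the Asymptotic Dirichlet theorem as in the circuit proof to find $p_n/q_n\to\ell_3$ with $|q_n\ell_3-p_n|<1/q_n$ and setting $\beta_n = 2\pi q_n + 2\pi q_n^{-1/4}$ (or the analogous choice with $\pi$ in place of $2\pi$ to also kill $\sin\beta_n$)—so that the determinant of the system degenerates while the right-hand side does not, forcing some coefficient, say $b_1$ or $a_1$, to blow up like a positive power of $q_n$. This makes $\|y_n^1\|$ (hence $\|z_n\|_H$) unbounded, giving the desired contradiction.

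\textbf{Main obstacle.} The delicate point is the second part: one must track the asymptotic sizes of all the entries of the $6\times 6$ (or $7\times 7$) coefficient matrix and of the load vector as $n\to\infty$ along the chosen subsequence, and verify that the leading-order cancellation produces genuine growth rather than an $O(1)$ quantity. This is exactly the kind of bookkeeping carried out in the circuit proof via the quantities $A,B,C,F,G,H$; the structure is the same here but the specific combinations change because the graph has two Dirichlet leaves and one Robin leaf instead of a circuit, so the identity $(FB+AG)\beta_n b_1 = AH - F\beta_n C$ must be rederived for this topology. Everything else—the well-posedness, the dissipativity, the reduction to a resolvent estimate, the explicit ODE solutions on each edge—is routine and parallels the material already in Sections~\ref{sec2}--\ref{sec4}.
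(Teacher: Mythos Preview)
Your proposal is correct and follows essentially the same route as the paper. In fact the paper's own proof is much terser than yours: it only addresses the non-exponential-stability assertion, and even there merely sets up the data $\beta_n = 2\pi q_n + 2\pi q_n^{-1/4}$ and $g_n = (0,-\sin\beta_n x,0)$ via Dirichlet approximation of $\ell_3$ and then writes ``we prove as in the previous case that $z_n$ \ldots is [not] bounded,'' leaving all the asymptotic bookkeeping to the circuit computation. Your sketch of the $A,B,C,F,G,H$-style analysis, adapted to the star topology with two Dirichlet leaves and one Robin leaf, is exactly what that sentence is pointing to.

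One small simplification in your asymptotic-stability argument: from the dissipation identity you get $v^1(\ell_1)=0$, and since $v^1=\mathbf{i}\beta y^1$ this gives $y^1(\ell_1)=0$ \emph{in addition to} $\dfrac{dy^1}{dx}(\ell_1)=0$. With both Cauchy data vanishing at $x=\ell_1$, the ODE $\beta^2 y^1 + (y^1)''=0$ forces $y^1\equiv 0$ immediately for every $\beta\neq 0$, so the detour through ``$\cos\beta=0$'' is unnecessary. After that, continuity at the center gives $y^2(0)=y^3(0)=0$, and the rest of your case split goes through cleanly.
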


\begin{proof}
As in the previous case, there exists $(p_{n},q_{n})\in \mathbb{N}^{2}$ such
that $\frac{p_{n}}{q_{n}}$ converge to $\ell _{4}$ as $n$ goes to infinity.
Then $q_{n}$ converge to infinity and there is a subsequence of $q_{n}$
denoted $q_{n}$ such that 
\begin{equation*}
\left| q_{n}\ell _{3}-p_{n}\right| <\frac{1}{q_{n}}
\end{equation*}
Then we take $\beta _{n}=2\pi q_{n}+\frac{2\pi }{q_{n}^{1/4}}$ and $%
f_{n}=(0,g_{n},0,0)$ with $g_{n}=(0,-\sin \beta _{n}x,0).$ The sequence $%
\beta _{n}$ tends to infinity as $n$ goes to infinity, $f_{n}\in H$ and $%
f_{n}$ is bounded. To conclude, we prove as in the previous case that $z_{n}$
defined by $(\mathcal{A}-\mathbf{i}\beta _{n})z_{n}=f_{n}$ is bounded.
\end{proof}

\section{A chain with non equal mass points} \label{sec5}

In this section, we consider a particular network which is a chain of $N$
edges ($N\geq 2$) and $p=N+1$ vertices such that the $(N-1)$ interior
vertices $a_{j}$ are point masses with mass $m_{j}.$ But the masses $m_{j}$
are not necessery equal (Figure \ref{figure_6}). 
\begin{figure}[th]
\centering
\includegraphics[width=18cm, keepaspectratio =true]{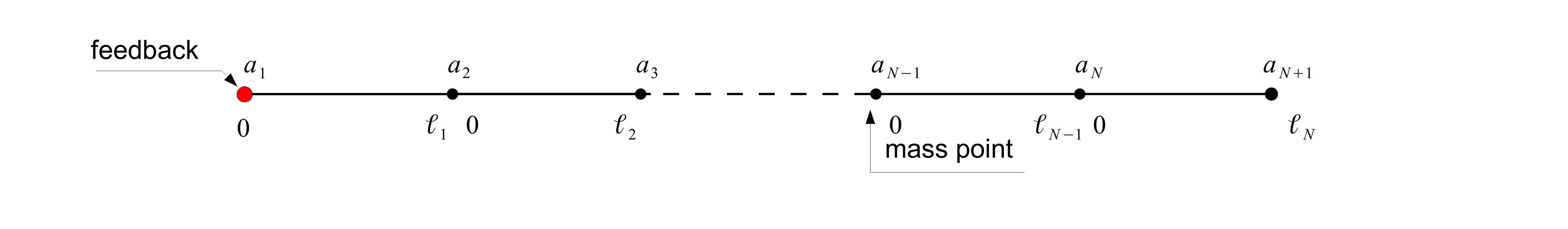}
\caption{A chain of strings}
\label{figure_6}
\end{figure}

Precisely, we consider the following system 
\begin{equation}
\left\{ 
\begin{tabular}{l}
$y_{tt}^{j}-y_{xx}^{j}=0$ in $(0,\ell _{j})\times (0,\infty ),\;\;j\in
\{1,...,N\}$, \\ 
\\ 
$y_{x}^{j}(0,t)-y_{x}^{j-1}(\ell _{j-1},t)=s_{j}^{\prime }(t),\;\;\;\;\;j\in
\{2,...,N\},$ \\ 
$m_{j}s_{j}^{\prime \prime }(t)+s_{j}(t)=-y_{t}(0,t),\;\;\;\;\;j\in
\{2,...,N\},$ \\ 
$y^{j-1}(\ell _{j-1},t)=y^{j}(0,t),\;\;\;\;j\in \{2,...,N\},$ \\ 
$y_{x}^{1}(0,t)=y_{t}(0,t),$ \\ 
$y^{N}(\ell _{N},t)=0,$ \\ 
\\ 
$s_{j}(0)=s_{j,0},\;\;s_{j}^{\prime }(0)=s_{j,1},\;\;\;\;\;j\in \{2,...,N\},$
\\ 
$y^{j}(x,0)=y_{0}^{j}(x),$ $\;y_{t}^{j}(x,0)=y_{k}^{1}(x),$ \ $x\in (0,\ell
_{j}),\;j\in \{1,...,N\}.$%
\end{tabular}
\right.  \label{chain}
\end{equation}

Note that the feedback is applied at the vertex $a_{1}.$ We give a necessary
and sufficient condition for the exponential stability of system (\ref{chain}%
). The general case of a tree with distinct masses at inner nodes is
complicated for the moment.

To start we quickly redefine the associated state space $H$ and the operator 
$\mathcal{A}$ as follow: 
\begin{equation*}
H=V\times \prod_{j=1}^{N}L^{2}(0,\ell _{j})\times \left( \prod_{j=1}^{N-1}%
\mathbb{C}\right) ^{2}
\end{equation*}
with 
\begin{equation*}
V=\left\{ \Phi \in \prod_{j=1}^{N}H^{1}(0,\ell _{j}),\;\Phi ^{N}(\ell
_{N})=0,\;\Phi ^{j-1}(\ell _{j-1})=\Phi ^{j}(0),\;j=2,...,N\right\} .
\end{equation*}
and 
\begin{equation*}
\mathcal{A}\left( 
\begin{array}{c}
y \\ 
v \\ 
p \\ 
q
\end{array}
\right) =\left( 
\begin{array}{c}
v \\ 
\dfrac{d^{2}y}{dx^{2}} \\ 
q \\ 
-m^{-1}(p+v_{\mathcal{M}})
\end{array}
\right) ,\forall \left( 
\begin{array}{c}
y \\ 
v \\ 
p \\ 
q
\end{array}
\right) \in \mathcal{D}(\mathcal{A}),
\end{equation*}
with $-m^{-1}(p+v_{\mathcal{M}})=(-\frac{1}{mj}p_{j}-\frac{1}{m_{j}}v(\ell
_{j}))_{j\in \{2,...,N-1\}}$ and 
\begin{equation*}
\mathcal{D}(\mathcal{A})=\left\{ (y,v,p,q)\in \left[ \prod_{j\in
J}H^{2}(0,\ell _{j})\cap V\right] \times V\times  \prod_{k\in I_{%
\mathcal{M}}}\mathbb{C}^2\text{ satisfying (\ref{1.334}) }\right\}
\end{equation*}
where 
\begin{equation}
\left\{ 
\begin{tabular}{l}
$\dfrac{dy^{j}}{dx}(0,t)-\dfrac{dy^{j-1}}{dx}(\ell
_{j-1})=q_{j},\;\;\;\;\;j\in \{2,...,N\},$ \\ 
$\dfrac{dy^{1}}{dx}(0)=v^{1}(0).$%
\end{tabular}
\right.  \label{1.334}
\end{equation}

Then the operator $\mathcal{A}$ generates a $\mathcal{C}_{0}$-semigroup of
contractions $(S(t))_{t\in \mathbb{R}^{+}}.$ Moreover, $\sigma (\mathcal{A}%
)=\sigma _{p}(\mathcal{A}).$

For every mass point $m$ we denote by $i_{1}(m),...,i_{k_{m}}(m)$ the
indices of the interior nodes with masses equal to $m$ and ordered as
follow, $i_{1}(m)<i_{2}(m)<...<i_{k_{m}}(m).$

For $r=r(m)\in \{1,...,k_{m}\}$ we define the scalars 
\begin{equation*}
\Pi_{m,r(m),s}=
\end{equation*}
\begin{equation*}
\sum_{i_{r}=j_{0}<j_{1}<\cdots <j_{s}<i_{r+1}}\left(
\prod\limits_{i=0}^{s-1}c_{j_{i+1}}\sin (\beta \ell _{j_{i}}+\cdots +\beta
\ell _{j_{i+1}-1})\right) \sin (\beta \ell _{j_{s}}+\cdots +\beta \ell
_{i_{r+1}-1})
\end{equation*}

and 
\begin{eqnarray*}
\Delta _{r(m)} &:&=(-1)^{i_{r+1}-i_{r}}\sin (\beta \ell _{i_{r}}+ ... +\beta
\ell _{i_{r+1}-1}) \\
&+& \sum_{s=0}^{i_{r+1}-i_{r}-1}(-1)^{i_{r+1}-i_{r}+s}\Pi_{m,r(m),s}, \\
\text{with,\ \ }\Delta _{r(m)} &=&\sin (\beta \ell _{r})\text{ if \ }k_{m}=1.
\end{eqnarray*}

Then we have the following result of asymptotic behavior of the system (\ref
{chain}):

\begin{lemma}
The system defined by (\ref{chain}) is exponentially stable if and only if
for every mass point $m$ and for every $r(m),$ $\Delta _{r(m)}\neq 0.$
\end{lemma}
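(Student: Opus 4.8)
The plan is to apply the Huang--Pr\"uss frequency-domain criterion, exactly as in the proof of Theorem~\ref{th32}: by \cite{Hua85,Pru84}, system (\ref{chain}) is exponentially stable if and only if (a) $i\mathbb{R}\subset\rho(\mathcal{A})$ and (b) $\limsup_{|\beta|\to\infty}\|(i\beta I-\mathcal{A})^{-1}\|<\infty$. Since $\sigma(\mathcal{A})=\sigma_p(\mathcal{A})$, (a) amounts to the triviality of every solution of $\mathcal{A}z=i\beta z$; as in Lemma~\ref{lem2}, dissipativity at the single controlled leaf $a_1$ forces $v^1(0)=0$, hence $\frac{dy^1}{dx}(0)=0$, so the problem becomes $\beta^2y^j+\frac{d^2y^j}{dx^2}=0$ on each edge with continuity at the interior nodes, $y^N(\ell_N)=0$, and the mass relations $(m_j\beta^2-1)p_j=i\beta\,y(a_j)$, $\frac{dy^j}{dx}(0)-\frac{dy^{j-1}}{dx}(\ell_{j-1})=i\beta p_j$. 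Writing $y^j=\alpha_j\sin(\beta x)+\gamma_j\cos(\beta x)$ and propagating from $a_1$, all coefficients are fixed linear functions of the single free parameter $\gamma_1$; the scalars $\Pi_{m,r(m),s}$ and $\Delta_{r(m)}$ arise precisely as the entries and the determinant of the transfer across a maximal run $i_1(m)<\cdots<i_{k_m}(m)$ of interior nodes carrying a common mass $m$ (the exceptional frequencies with $m_j\beta^2=1$, and $\beta=0$, being handled directly via $\sum_j\|dy^j/dx\|^2=0$ as in Lemma~\ref{lem2}).

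\textbf{Sufficiency.} Assume $\Delta_{r(m)}\neq0$ for every $m$ and every $r(m)$. First, (a): the recursion above, read together with the Dirichlet condition at the far end, then admits only $\gamma_1=0$, hence $y\equiv0$, so $\mathcal{A}$ has no eigenvalue on $i\mathbb{R}$. Next, (b): argue by contradiction as in Theorem~\ref{th32}. Given $\beta_n\to+\infty$, $z_n=(y_n,v_n,p_n,q_n)\in\mathcal{D}(\mathcal{A})$ with $\|z_n\|_H=1$ and $\|(i\beta_nI-\mathcal{A})z_n\|_H\to0$, dissipativity gives $v_n^1(0)\to0$, hence $\beta_ny_n^1(0)\to0$ and $\frac{dy_n^1}{dx}(0)\to0$, while the interpolation bound keeping $\|v_n^j\|_\infty/\beta_n^{1/2}$ bounded yields $q_{k,n}\to0$ and $p_{k,n}\to0$ at every interior node. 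Taking the inner product of the reduced equation with $b\,\frac{dy_n^j}{dx}$ ($b=x$ or $b=\ell_j-x$) on $e_1$ and iterating along the chain — using the trace argument of \cite{Far13} to carry the smallness of $\int_{e_j}(|\beta_ny_n^j|^2+|\partial_xy_n^j|^2)$ across each interior node — gives this smallness on every edge: the passage through a mass node is harmless because the derivative jump $q_{k,n}$ is negligible, and over a run of equal masses the relevant transfer matrix has determinant a perturbation of $\Delta_{r(m)}(\beta_n)$, which stays bounded away from $0$ at high frequency. Hence $\|v_n^j\|\to0$ and $\|z_n\|_H\to0$, a contradiction.

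\textbf{Necessity.} Suppose some $\Delta_{r_0(m_0)}$ vanishes — either $\Delta_{r_0(m_0)}(\beta_0)=0$ for some $\beta_0\in\mathbb{R}$, in which case $i\beta_0$ is an eigenvalue of $\mathcal{A}$ (by the reduction above) and (a) fails, or $\Delta_{r_0(m_0)}$ fails to be bounded away from $0$ along a sequence $\beta_n\to+\infty$, produced by an asymptotic-Dirichlet argument \cite{Shm80} on the lengths within the $m_0$-run as in the circuit and star cases. In the latter situation, mimic the explicit blow-up constructions of those cases: pick a bounded $f_n=(0,g_n,0,0)\in H$ with $g_n$ a single sine mode on one edge and solve $(i\beta_nI-\mathcal{A})z_n=f_n$; the coefficient determinant of the resulting linear system factors, to leading order in $1/\beta_n$, through $\Delta_{r_0(m_0)}(\beta_n)$, while the right-hand side stays $O(1)$, so $\|y_n^j\|\to\infty$ on some edge and (b) fails. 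Either way, the system is not exponentially stable.

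\textbf{Main obstacle.} The crux is the explicit algebra of the chain transfer with repeated masses: verifying that the contributions from a run of $m$-nodes assemble into the nested sums $\Pi_{m,r(m),s}$ and the determinant $\Delta_{r(m)}$ (keeping careful track of the $(m_j\beta^2-1)$ denominators), and — in the sufficiency part — controlling the lower-order remainders uniformly in $\beta_n$ so that a high-frequency bound $|\Delta_{r(m)}|\ge c>0$ genuinely prevents loss of control as $\beta_n\to\infty$. Runs delimited by distinct masses transfer \emph{generically} and cause no trouble; it is precisely runs of interior nodes sharing a common mass that can resonate, which is exactly what $\Delta_{r(m)}$ measures.
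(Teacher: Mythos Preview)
Your proposal misidentifies what $\Delta_{r(m)}$ is and where it enters the argument. In the paper, $\Delta_{r(m)}$ is \emph{not} a transfer determinant that arises for generic $\beta$; it is evaluated only at the single resonant frequency $\beta=1/\sqrt{m}$ attached to the mass value $m$. At that frequency the relation $(m_j\beta^2-1)p_j=\mathbf{i}\beta\,y(a_j)$ degenerates at every node $a_j$ with $m_j=m$: it forces $y(a_j)=0$ but leaves $p_j$ --- hence the derivative jump $q_j=\mathbf{i}\beta p_j$ --- \emph{free}. So your claim that ``all coefficients are fixed linear functions of the single free parameter $\gamma_1$'' fails precisely at the resonant frequencies, and these are the only frequencies in play. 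The chain decouples, at $\beta=1/\sqrt{m}$, into independent blocks delimited by consecutive $m$-mass nodes $a_{i_r},a_{i_{r+1}}$, with Dirichlet data $y=0$ at both ends and well-defined transfer through the intermediate nodes of mass $\neq m$ via $c_j=\frac{1}{\beta(m_j-m)}$. The paper writes the resulting linear system on one block as a matrix $S_N$, computes its determinant via the two-term recursion $\Delta_N=(-\cos x_N+c_N\sin x_N)\Delta_{N-1}+(\sin x_N)M_{N-1}$, and identifies it with $\Delta_{r(m)}$. Thus ``$\Delta_{r(m)}\neq0$ for every $m$ and every $r(m)$'' is exactly the statement ``$\mathcal{A}$ has no eigenvalue on $\mathbf{i}\mathbb{R}$''. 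Your parenthetical that the case $m_j\beta^2=1$ is ``handled directly via $\sum_j\|dy^j/dx\|^2=0$'' is the core error: that energy identity disposes only of $\beta=0$; the resonant case is the entire content of the lemma.

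This changes both halves of your proof. For necessity: if some $\Delta_{r_0(m_0)}=0$, then $\mathbf{i}/\sqrt{m_0}$ is an eigenvalue and the system is not even asymptotically stable --- there is no high-frequency alternative and no Diophantine construction to perform. For sufficiency: the resolvent bound (\ref{2.10}) does not involve $\Delta_{r(m)}$ at all, since $\beta_n\to\infty$ is far from every finite resonant frequency $1/\sqrt{m}$; the multiplier iteration of Theorem~\ref{th32} goes through verbatim (your own observation that $q_{k,n}\to0$ makes every interior node transparent is the whole story, and the paper says exactly this). Finally, your ``Main obstacle'' paragraph is inverted: a run of nodes with a common mass does not resonate \emph{within} itself --- it is the sub-chain \emph{between} two consecutive $m$-mass nodes, with masses $\neq m$ inside, that may trap a Dirichlet eigenmode, and $\Delta_{r(m)}$ is precisely the determinant that detects this.
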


\begin{proof}
The first question is whether $\mathbf{i}\mathbb{R}$ belongs to $\rho (A).$
Thus, we will solve the equation 
\begin{equation}
\mathcal{A}z=\mathbf{i}\beta z  \label{exp2"}
\end{equation}
of unknown $z=\left( 
\begin{array}{c}
y \\ 
v \\ 
p \\ 
q
\end{array}
\right) \in \mathcal{D}(\mathcal{A})-\{0\}$ and $\beta \in \mathbb{R}$.

If $\beta =0$ then $z=0.$ Thus suppose that $\beta \neq 0.$ We have 
\begin{eqnarray}
v^{j} &=&\mathbf{i}\beta y^{j},\;\;\beta ^{2}y^{j}+\dfrac{d^{2}y^{j}}{dx^{2}}%
=0,\;\;j\in \{1,....,N\},  \label{eq'} \\
q^{j} &=&\mathbf{i}\beta p^{j},\;\;(m_{j}\beta
^{2}-1)p_{j}=v(a_{j}),\;\;j\in \{2,....,N\}.  \label{eq2'}
\end{eqnarray}
The function $y^{j},$ $j\in \{1,....,N\}$ is then of the form 
\begin{equation*}
y^{j}=\alpha _{j}\cos (\beta x)+\gamma _{j}\sin (\beta x).
\end{equation*}
If $m_{j}\beta ^{2}\neq 1$ for every $j$ in $\{2,...,N\}$ then we prove by
iteration, starting with $j=1$, that $y^{j}=0$ for every $j$ in $\{2,...,N\}$
and consequently $z=0.$

Now we suppose the existence of a mass point $m$ with $m\beta ^{2}=1,$Let $%
i_{1}(m)<...<i_{k_{m}}(m)$ the indices of inner nodes with masses equal to $%
m.$ Then as in the first case $y^{j}=0$ for every $j<i_{1}$. Let $r=r(m)\in
\{1,...,k_{m}\}$ we have the following system 
\begin{equation}
\left\{ 
\begin{tabular}{l}
$y^{r}(0)=0,$ \\ 
for $j=i_{r}$ to $j=i_{r+1}-2,$ $y^{j}(\ell _{j})=y^{j+1}(0),$ \\ 
and $-\dfrac{dy^{j}}{dx}(\ell _{j})+\dfrac{dy^{j+1}}{dx}(0)=q^{j+1},$ \\ 
$y^{i_{r+1}-1}(\ell _{i_{r+1}-1})=0$%
\end{tabular}
\right.  \label{system}
\end{equation}
It is obvious that the system (\ref{exp2"}) has trivial solution if and only
if for every $m$ and every $r(m)$ the system (\ref{system}) has a trivial
solution.

By changes of indices we can suppose that $i_{r}=2,$ $i_{r+1}=N+1.$ The
system (\ref{system}) can be rewritten as 
\begin{equation*}
\left\{ 
\begin{tabular}{l}
$\gamma _{2}=0,$ \\ 
for $j=2$ to $j=N-1,$ $\alpha _{j}\cos (\beta \ell _{j})+\gamma _{j}\sin
(\beta \ell _{j})=\gamma _{j+1},$ \\ 
and $\alpha _{j}\sin (\beta \ell _{j})-\gamma _{j}\cos (\beta \ell _{j})+%
\frac{1}{\beta }\frac{1}{m_{j}-m}\alpha _{j+1}+\gamma _{j+1}=0,$ \\ 
$\alpha _{N}\cos (\beta \ell _{N})+\gamma \sin (\beta \ell _{N})=0$%
\end{tabular}
\right.
\end{equation*}
The matrix of such system is 
\begin{equation*}
S_{N}=\left( 
\begin{array}{cccccccccc}
1 & 0 & 0 &  &  &  &  &  &  &  \\ 
\cos x_{2} & \sin x_{2} & -1 & 0 &  &  &  &  &  &  \\ 
\sin x_{2} & -\cos x_{2} & c_{3} & 1 & 0 &  &  &  &  &  \\ 
0 & 0 & \cos x_{3} & \sin x_{3} & -1 & 0 &  & (0) &  &  \\ 
0 & 0 & \sin x_{3} & -\cos x_{3} & c_{4} & 1 & 0 &  &  &  \\ 
&  &  &  & \cdots & \cdots & \cdots & \cdots &  &  \\ 
&  &  &  & \cdots & \cdots & \cdots & \cdots & 0 &  \\ 
&  & (0) &  &  &  & \cos x_{N-1} & \sin x_{N-1} & -1 & 0 \\ 
&  &  &  &  &  & \sin x_{N-1} & -\cos x_{N-1} & c_{N} & 1 \\ 
&  &  &  &  &  & 0 & 0 & \cos x_{N} & \sin x_{N}
\end{array}
\right)
\end{equation*}
where $x_{j}=\beta \ell _{j}$ and $c_{j}=\frac{1}{\beta (m_{j}-m)}.$ We want
to calculate the determinant $\Delta _{N}$ of $S_{N}$ For this, Let $M_{N}$
the determinant of the matrix obtain from $S_{N}$ by replacing $\cos x_{N}$
and $\sin x_{N}$ in the last line by $\sin x_{N}$ and $-\cos x_{N}$
respectively.

One can verifies easily that 
\begin{eqnarray*}
\Delta _{2} &=&\sin x_{2},\;\;\Delta _{3}=-\sin (x_{2}+x_{3})+c_{3}\sin
x_{2}\sin x_{3}, \\
\Delta _{4} &=&\sin (x_{2}+x_{3}+x_{4})-c_{3}\sin x_{2}\sin
(x_{3}+x_{4})-c_{4}\sin (x_{2}+x_{3})\sin x_{4} \\
&+& c_{3}c_{4}\sin x_{2}\sin x_{3}\sin x_{4}, \\
M_{2} &=&-\cos x_{2},\;\;M_{3}=\cos (x_{2}+x_{3})-c_{3}\sin x_{2}\cos x_{3},
\\
M_{4} &=&-\cos (x_{2}+x_{3}+x_{4})+c_{3}\sin x_{2}\cos
(x_{3}+x_{4})+c_{4}\sin (x_{2}+x_{3})\cos x_{4} \\
&-& c_{3}c_{4}\sin x_{2}\sin x_{3}\cos x_{4}.
\end{eqnarray*}

We will prove by induction that for every integer $N\geq 2,$%
\begin{equation*}
\Delta _{N} =(-1)^{N}\sin
(x_{2}+...+x_{N})+(-1)^{N-1}\sum_{j=2}^{N-1}c_{j+1}\sin
(x_{2}+...+x_{j-1})\sin (x_{j}+...+x_{N}) 
\end{equation*}
\begin{equation}
+\sum_{s=2}^{N-2}(-1)^{N-1-s}\sum_{2=j_{0}<j_{1}<\cdots <j_{s}\leq N}\left(
\prod\limits_{i=0}^{s-1}c_{j_{i+1}}\sin (x_{j_{i}}+...+x_{j_{i+1}-1})\right)
\sin (\beta \ell _{j_{s}}+\cdots +\beta \ell _{N})  \label{rule1}
\end{equation}
and 
\begin{equation*}
M_{N} =(-1)^{N+1}\cos (x_{2}+...+x_{N})+(-1)^{N}\sum_{j=2}^{N-1}c_{j+1}\sin
(x_{2}+...+x_{j-1})\cos (x_{j}+...+x_{N})
\end{equation*}
\begin{equation}
+\sum_{s=2}^{N-2}(-1)^{N-s}\sum_{2=j_{0}<j_{1}<\cdots <j_{s}\leq N}\left(
\prod\limits_{i=0}^{s-1}c_{j+1}\sin (x_{j_{i}}+...+x_{j_{i+1}-1})\right)
\cos (x_{j_{s}}+...+x_{N}).  \label{rule2}
\end{equation}
Such rules are true for $N=2.$ and $N=3$ let $N\in \mathbb{N}$ with $N\geq 3$
and suppose that (\ref{rule1}) and (\ref{rule2}) are true. Some calculations
leads to 
\begin{eqnarray*}
\Delta _{N} &=&(-\cos x_{N}+c_{N}\sin x_{N})\Delta _{N-1}+(\sin
x_{N})M_{N-1}, \\
M_{N} &=&(-\sin x_{N}-c_{N}\cos x_{N})\Delta _{N-1}-(\cos x_{N})M_{N-1}
\end{eqnarray*}
We can now verify the rule (\ref{rule1}) to order $N:${\tiny 
\begin{eqnarray*}
&&\Delta _{N} \\
&=&\left( (-1)^{N}\sin
(x_{2}+...+x_{N})+(-1)^{N-1}\sum_{j=2}^{N-1}c_{j+1}\sin
(x_{2}+...+x_{j-1})\sin (x_{j}+...+x_{N})\right. \\
&&\left. +\sum_{s=2}^{N-3}(-1)^{N-1-s}\sum_{2=j_{0}\leq j_{1}<\cdots
<j_{s}\leq N-1}\left( \prod\limits_{i=0}^{s-1}c_{j_{i+1}}\sin
(x_{j_{i}}+...+x_{j_{i+1}-1})\right) \sin (x_{j_{s}}+...+x_{N})\right) \\
&&\left. +\sum_{s=2}^{N-3}(-1)^{N-2-s}\sum_{2=j_{0}\leq j_{1}<\cdots
<j_{s}\leq N-1}c_{N}\left( \prod\limits_{i=0}^{s-1}c_{j_{i+1}}\sin
(x_{j_{i}}+...+x_{j_{i+1}-1})\right) \sin (x_{j_{s}}+...+x_{N-1})\sin
x_{N}\right) \\
&=&\left( (-1)^{N}\sin
(x_{2}+...+x_{N})+(-1)^{N-1}\sum_{j=2}^{N-1}c_{j+1}\sin
(x_{2}+...+x_{j-1})\sin (x_{j}+...+x_{N})\right. \\
&&\left. +\sum_{s=2}^{N-2}(-1)^{N-1-s}\sum_{2=j_{0}\leq j_{1}<\cdots
<j_{s}\leq N}\left( \prod\limits_{i=0}^{s-1}c_{j_{i+1}}\sin
(x_{j_{i}}+...+x_{j_{i+1}-1})\right) \sin (x_{j_{s}}+...+x_{N})\right).
\end{eqnarray*}
} A similar calculus, using (\ref{rule2}) proves that (\ref{rule2}) is
verified in order $N.$

We can now state the following results:

The associated semigroup $S(t)$ is asymptotically stable if and only if $%
\Delta _{r(m)}$ is different from zero for every mass point $m$ and every $%
r(m).$ To conclude that $(S(t))_{t \geq 0}$ is exponentially stable, it
suffices to prove that (\ref{2.10}) is satisfied by $(S(t))_{t \geq 0},$
exactly as in the proof of Theorem \ref{th32}
\end{proof}

\section*{Acknowledgment} This work was carried out under Airbus Chair Grant. Financial support from Airbus Chair Grant is gratefully acknowledged.


\end{document}